\documentclass[review]{elsarticle}

\usepackage[T1]{fontenc}
\usepackage{lmodern}
\usepackage[a4paper,margin=1in]{geometry}
\usepackage{amsmath,amssymb,amsthm,mathtools}
\usepackage{enumitem}
\usepackage{xcolor}
\usepackage{microtype}
\usepackage[colorlinks=true,linkcolor=blue!55!black,citecolor=blue!55!black,
            urlcolor=blue!55!black]{hyperref}

\setlist[itemize]{leftmargin=2em,itemsep=0.2em,topsep=0.3em}

\newtheorem{theorem}{Theorem}[section]
\newtheorem{proposition}[theorem]{Proposition}
\newtheorem{corollary}[theorem]{Corollary}
\newtheorem{lemma}[theorem]{Lemma}
\newtheorem*{maintheorem}{Main Theorem}
\theoremstyle{definition}
\newtheorem{definition}[theorem]{Definition}
\newtheorem{remark}[theorem]{Remark}

\newcommand{\ua}{\mathord{\uparrow}}
\newcommand{\da}{\mathord{\downarrow}}
\newcommand{\RO}{\operatorname{RO}}
\newcommand{\Iso}{\operatorname{Iso}}
\newcommand{\Int}[2]{\operatorname{int}_{#1}\!\left(#2\right)}
\newcommand{\Cl}[2]{\operatorname{cl}_{#1}\!\left(#2\right)}
\newcommand{\Pow}{\mathcal P}
\newcommand{\Scott}{\Sigma}
\newcommand{\cl}{\operatorname{cl}}

\begin{document}
\begin{frontmatter}

\title{\bfseries Sobriety of Regular Open Algebras in
Second-Countable $T_3$ Spaces}
\author[1]{Xiaoyong Xi}
\author[2,3]{Chong Shen\corref{cor1}}
\ead{shenchong0520@163.com}
\cortext[cor1]{Corresponding author.}
\author[4]{Dongsheng Zhao}

\address[1]{School of Mathematics and Statistics,	Yancheng Teachers University, Yancheng, Jiangsu,  China}
\address[2]{School of Mathematical Sciences,	Beijing University of Posts and Telecommunications,  Beijing, 
	China}
\address[3] {Key Laboratory of Mathematics and Information Networks (Beijing University of
	Posts and Telecommunications), Ministry of Education, China}
\address[4]{Mathematics and Mathematics Education, National Institute of Education,
	Nanyang Technological University,  1 Nanyang Walk, Singapore}
\date{}

\begin{abstract}
	For a topological space $X$, let $\RO(X)$ be the complete Boolean algebra
	of regular open subsets of $X$, ordered by inclusion. We prove that, for
	every second-countable $T_3$ space $X$, the Scott space $\Scott\RO(X)$ is
	sober if and only if $\Iso(X)$ is dense in $X$. Consequently, $\Sigma\RO(\mathbb R^n)$ is not sober for every
	positive integer $n$. In particular, $\Sigma\RO(\mathbb R)$ is not
	sober, which provides an answer to an open problem concerning the
	sobriety of complete Boolean algebras.
	This characterization also yields a systematic way to
	obtain more natural examples of complete lattices whose Scott spaces are
	non-sober.
\begin{keyword}
Scott topology; sober space; complete Boolean algebra; regular open set;
isolated point.
\MSC[2020] 06B35, 54A05, 54D99.
\end{keyword}
\end{abstract}
\end{frontmatter}

\section{Introduction}

For a poset $P$, the Scott topology is the topology whose open sets are
the upper sets inaccessible by existing directed suprema.  Scott spaces
are central in domain theory, where sobriety permits the recovery of
points from irreducible closed sets; see Scott's foundational paper
\cite{Scott1972} and the standard references
\cite{GierzEtAl2003,GoubaultLarrecq2013}.  Continuous dcpos have sober
Scott spaces, but Scott sobriety fails in general.  Johnstone constructed
the first non-sober dcpo \cite{Johnstone1981}, and Isbell subsequently
obtained a non-sober complete lattice \cite{Isbell1982}.  Subsequent work gave a complete Heyting algebra with a non-sober Scott
space \cite{XuXiZhao2021}, as well as a countable complete lattice with a
non-sober Scott space \cite{MiaoEtAl2023}.

A particularly natural unresolved test case has been the complete Boolean
algebra
\[
B=\RO(\mathbb R)
\]
of regular open subsets of the real line, ordered by inclusion. Ern\'{e} and Gatzke showed that $\Sigma B$ is not a topological
join-semilattice \cite{ErneGatzke1985}.  As recorded by Xu and Zhao
\cite[Question~3.6]{XuZhao2021}, Ern\'{e}  later asked whether the Scott
space $\Sigma B$ is sober; see also
\cite[Question~3.6]{Xu2023Recent}.  The more general question whether
there exists any complete Boolean algebra with a non-sober Scott space was
recorded alongside it \cite[Question~3.7]{XuZhao2021,Xu2023Recent}.

The aim of the present paper is to answer these questions and place the
phenomenon in a general topological framework. For every second-countable
$T_3$ space $X$, we characterize precisely when the Scott space of the
regular open algebra $\RO(X)$ is sober.

\begin{maintheorem}
Let $X$ be a second-countable $T_3$ space.  Then the following statements
are equivalent:
\begin{enumerate}[label=\textup{(\arabic*)},leftmargin=2.8em]
  \item $\Scott\RO(X)$ is sober;
  \item the complete Boolean algebra $\RO(X)$ is atomic;
  \item $\Cl{X}{\Iso(X)}=X$.
\end{enumerate}
\end{maintheorem}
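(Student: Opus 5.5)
We prove (2)$\Leftrightarrow$(3), then (2)$\Rightarrow$(1), and finally the hard direction (1)$\Rightarrow$(2) by contraposition.

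\textbf{(2)$\Leftrightarrow$(3).} In a $T_3$ space, the atoms of $\RO(X)$ are exactly the singletons $\{x\}$ with $x\in\Iso(X)$.

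First, an isolated point gives an open singleton, and $\{x\}$ is closed (by $T_1$). Hence $\operatorname{int}\operatorname{cl}\{x\}=\{x\}$, so $\{x\}$ is regular open, and it is clearly an atom.

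Conversely, let $U$ be a regular open set containing two points $x\neq y$. By regularity choose an open $V\ni x$ with $\operatorname{cl}V\subseteq U\setminus\{y\}$. Then $\operatorname{int}\operatorname{cl}V$ is a nonzero regular open set strictly below $U$. So atoms are singletons, and these are isolated points.

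Atomicity means every nonzero regular open set contains an atom, i.e.\ meets $\Iso(X)$. Since every nonempty open set contains a nonempty regular open set (e.g.\ $\operatorname{int}\operatorname{cl}$ of a basic open subset), this says exactly that $\Iso(X)$ is dense.

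\textbf{(2)$\Rightarrow$(1).} If $\RO(X)$ is atomic, it is isomorphic to the powerset $\Pow(\Iso(X))$ via $U\mapsto U\cap\Iso(X)$. The inverse is $A\mapsto\operatorname{int}\operatorname{cl}A$; density of $\Iso(X)$ makes both maps order preserving and mutually inverse. A powerset is an algebraic lattice, hence continuous, so its Scott space is sober.

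\textbf{(1)$\Rightarrow$(2).} Suppose $\Iso(X)$ is not dense. Then some nonempty open set avoids $\Iso(X)$, and its regularization $W$ is a nonzero regular open subspace without isolated points. Since $\RO(X)$ decomposes as $\RO(W)\times\RO(X\setminus\operatorname{cl}W)$, and a product of posets with nonempty factors is Scott-sober only if each factor is, it suffices to treat $X$ crowded (dense-in-itself).

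For crowded $X$, fix a countable base $\mathcal B$ of regular open sets. The candidate irreducible closed set is
\[
\mathcal A=\{U\in\RO(X): X\setminus\operatorname{cl}U\neq\emptyset\}\cup\ \text{(Scott closure)},
\]
or more carefully $\mathcal A=\RO(X)\setminus\{X\}$. This set is Scott closed exactly when $X$ is not a directed join of proper regular opens. In a crowded space it is not Scott closed: deleting one point from $X$ gives a dense open set, and such sets form a directed family with join $X$.

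So the correct candidate is a smaller closed set. I would take $\mathcal A$ to be the Scott closure of a family $\mathcal F$ of ``small'' regular open sets, for instance finite unions of basic sets whose closures miss a dense sequence of points. The plan is to:
\begin{enumerate}[label=(\roman*)]
\item show that any two nonempty Scott opens $\mathcal U_1,\mathcal U_2$ meeting $\mathcal F$ have a common member in $\mathcal F$. Here the Boolean join of an element of $\mathcal F\cap\mathcal U_1$ and one of $\mathcal F\cap\mathcal U_2$ is again in $\mathcal F$ and lies in both, since Scott opens are upper sets. This gives irreducibility.
\item show that $\mathcal A$ is not the closure of any point $\da V$.
\end{enumerate}

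The main obstacle is step (ii), where second countability enters. For a regular open $V\neq X$, pick a nonempty basic set $B$ disjoint from $V$. Using crowdedness and $T_3$, build a Cantor-like construction inside $B$: a directed family of members of $\mathcal F$ whose join is not below $V$, while staying in the Scott closure of $\mathcal F$. To rule out $V=X$, exhibit a Scott open set containing $X$ but missing $\mathcal F$. This is done via a countable diagonalization over $\mathcal B$: define $\mathcal U$ as the set of regular opens that are dense in some fixed ``fat'' region. Verifying inaccessibility by directed joins in $\RO(X)$, where joins are $\operatorname{int}\operatorname{cl}$ of unions, is the delicate calculation, and I expect it to be the heart of the proof.
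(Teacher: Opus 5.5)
\textbf{The crowded case has a genuine gap.} Your proofs of (2)$\Leftrightarrow$(3) and (2)$\Rightarrow$(1) are sound. So is the reduction of (1)$\Rightarrow$(2) to a crowded regular open piece: the decomposition $\RO(X)\cong\RO(W)\times\RO(X\setminus\operatorname{cl}W)$, with each factor a Scott retract, is the paper's principal-ideal retract (Lemma~\ref{lem:principal-retract}). One small fix: to place a nonempty regular open set inside a nonempty open $O$, take $\operatorname{int}\operatorname{cl}W$ for an open $W\ni x$ with $\operatorname{cl}W\subseteq O$. The set $\operatorname{int}\operatorname{cl}$ of an arbitrary basic subset of $O$ can leave $O$.

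Step (i) gets irreducibility by requiring $\mathcal F$ to be closed under binary joins. That makes $\mathcal F$ directed. In a complete lattice, the Scott closure of a directed set $\mathcal F$ is exactly $\da\bigvee\mathcal F$. Indeed, any Scott-closed set containing $\mathcal F$ contains the directed supremum $\bigvee\mathcal F$, and $\da\bigvee\mathcal F$ is itself Scott closed. So your $\mathcal A$ is always the closure of the point $\bigvee\mathcal F$, and step (ii) fails for every choice of $\mathcal F$. In particular, no Scott-open set containing $\bigvee\mathcal F$ can miss $\mathcal F$. Separately, the description of $\mathcal F$ is incoherent as literally written, since the closure of a nonempty open set always meets a dense set.

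A witness to non-sobriety must be an irreducible Scott-closed set with no largest element. Its irreducibility therefore has to come from a real fact about Scott-open sets, not from upper bounds inside the set. The paper supplies this in three steps, and your proposal has no substitute for any of them.
\begin{enumerate}[label=(\alph*)]
\item \emph{Complementary-pair theorem} (Theorem~\ref{thm:mask}). Take a countable family $g_n$ below every nonzero element, each carrying a decreasing sequence $a_{n,j}$ with meet $0$. Complements of Scott-open sets are dual Scott open, so one can diagonalize to get increasing $b_n$ with $\bigvee b_n=1$. Hence any two nonempty Scott-open sets contain complementary elements. This is where second countability does its work.
\item \emph{The witness} $\Delta_B=\{(p,q)\in B^2:p\wedge q=0\}$. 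It is Scott closed and has no top, since it contains $(1,0)$ and $(0,1)$. It is irreducible by (a), applied to the pullbacks of the open sets along $t\mapsto(p\wedge t,q\wedge t)$.
\item \emph{Transport to $B$.} Since $B\cong B^2$, the witness lives in $\Scott B$. This isomorphism comes from a countable dense crowded $D\cong\mathbb Q$ together with $\mathbb Q\sqcup\mathbb Q\cong\mathbb Q$.
\end{enumerate}
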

Consequently, for every positive integer $n$, neither
$\Scott\RO(\mathbb R^n)$ nor $\Scott\RO(2^{\mathbb N})$ is sober, where
$\mathbb R^n$ denotes the $n$-dimensional Euclidean space and
$2^{\mathbb N}$ denotes the Cantor space.  
In particular, the case $n=1$ therefore provides a negative answer to
Question~3.6 of Xu and Zhao~\cite{XuZhao2021}; see also
Question~3.6 in~\cite{Xu2023Recent}.
 Moreover, the characterization
provides a systematic way to obtain further natural examples of complete
lattices whose Scott spaces are non-sober.

\section{Preliminaries}

We collect some basic concepts and results that will be used later.  For more
details, we refer the reader to~\cite{GierzEtAl2003,GoubaultLarrecq2013}.

Throughout the paper, $\mathbb N=\{0,1,2,\ldots\}$. For each positive
integer $n$, let $\mathbb R^n$ denote the $n$-dimensional Euclidean space;
in particular, $\mathbb R^1=\mathbb R$.

Let $P$ be a poset.
For a subset $A$ of  $P$, we denote
$\ua A=\{y\in P: \exists x\in A, x\leq y\}$ and $\da A=\{y\in P:\exists x\in A, y\leq x\}$, respectively.
For each $x\in P$, we simply write $\ua x$ and $\da x$ for $\ua\{x\}$ and  $\da \{x\}$, respectively. A subset $A$ of $P$ is called a \emph{lower} (resp., an \emph{upper}) \emph{set} if $A=\da A$ (resp., $A=\ua A$).
An element $x$ is \emph{maximal} in $A\subseteq P$ if
$A\cap\ua x=\{x\}$.

A nonempty subset $D$ of $P$ is called \emph{directed} if for any $x,y\in D$, there exists $z\in D$ such that $x\leq z$ and $y\leq z$.
We call $P$ a \emph{dcpo} (\emph{directed-complete poset}) if for any directed subset $D$ of $P$, $\bigvee D$ exists.
A subset $U$ of  $P$ is \emph{Scott open} if
(i) $U=\mathord{\uparrow}U$ and (ii) for each directed subset $D$ of $P$ with $\bigvee D$ existing, $\bigvee D\in U$ implies $D\cap
U\neq\emptyset$. All Scott open subsets of $P$ form a topology on $P$,
called the \emph{Scott topology} and denoted by $\sigma(P)$. The space $\Sigma P=(P,\sigma(P))$ is called the
\emph{Scott space} of $P$.

We shall use the following standard criteria.  If $P$ is a dcpo, then a
subset of $P$ is Scott closed if and only if it is a lower set closed under
directed suprema.  A map between dcpos is Scott continuous if and only if it
is monotone and preserves directed suprema.

\begin{definition}\label{def:basic-topology}
	Let $X$ be a topological space.
	\begin{enumerate}[label=\textup{(\roman*)},leftmargin=2.8em]
		\item A subset $A\subseteq X$ is \emph{dense} in $X$ if
		$\Cl{X}{A}=X$. Equivalently, every nonempty open subset of $X$
		meets $A$.

		\item A family $\mathcal B$ of open subsets of $X$ is a \emph{base}
		for the topology of $X$ if, whenever $x\in U$ and $U$ is open in
		$X$, there exists $B\in\mathcal B$ such that
		$x\in B\subseteq U$. The space $X$ is \emph{second-countable} if
		its topology has a countable base.

		\item The space $X$ is a $T_1$ space if every singleton is closed.
		It is \emph{regular} if, whenever $x\in X$ and $F\subseteq X$ is
		closed with $x\notin F$, there exist disjoint open sets $V$ and $W$
		such that $x\in V$ and $F\subseteq W$. Equivalently, for every open
		set $U$ and every $x\in U$, there exists an open set $V$ such that
		\[
			x\in V\subseteq\Cl{X}{V}\subseteq U.
		\]
		The space $X$ is a $T_3$ space if it is both regular and $T_1$.

		\item A point $x\in X$ is \emph{isolated} if $\{x\}$ is open in
		$X$. We write $\Iso(X)$ for the set of all isolated points of $X$.
	\end{enumerate}
\end{definition}

\begin{definition}
	A nonempty subset $A$ of a space $X$ is \emph{irreducible} if for any closed sets $C_1, C_2$ of $X$, $A\subseteq C_1\cup C_2$ implies $A\subseteq C_1$ or $A\subseteq C_2$.
	A $T_0$ space $X$ is called \emph{sober} if for any irreducible closed set $A$, $A=\cl_X(\{x\})$ for some $x\in X$.
  Equivalently, a nonempty subset $A$ is irreducible if, whenever open sets
  $U$ and $V$ both meet $A$, their intersection $U\cap V$ also meets $A$;
  see~\cite{GierzEtAl2003,GoubaultLarrecq2013}.
\end{definition}

	A \emph{retract} of a topological space $Y$ is a topological space $X$
such that there exist continuous maps
$s\colon X\to Y$ (the \emph{section}) and
$r\colon Y\to X$ (the \emph{retraction}) satisfying
$r\circ s=\operatorname{id}_X$.

\begin{remark}\label{rem:standard-facts}
	The following standard facts will be used repeatedly.
	\begin{enumerate}[label=\textup{(\arabic*)}]
		\item Every retract of a sober space is sober;
		see~\cite[Chapter~8]{GoubaultLarrecq2013}.
		\item
			For every
			$x\in P$, the closure of $\{x\}$ in $\Scott P$ is
			$\mathord\downarrow x$. Consequently, a Scott-closed subset of a complete lattice
			which is the closure of a point has a greatest element.
		\item For every set $A$, the Scott space of the powerset $\Pow(A)$ under the inclusion order
		is sober.  Indeed, $\Pow(A)$ is an algebraic  dcpo,
		and the Scott space of every algebraic dcpo is sober;
		see~\cite[Chapter~II]{GierzEtAl2003}.
	\end{enumerate}
\end{remark}

A Boolean algebra $B$ is \emph{complete} if every subset of $B$ has a join
and a meet.  We write $0,1,\wedge,\vee$, and $\neg$ for its least element,
greatest element, meet, join, and complementation, respectively.  It is
\emph{nontrivial} if $0\neq1$. 
In every complete Boolean algebra, the following infinitary distributive
identities hold for all $b\in B$ and $A\subseteq B$:
\begin{align}
	b\wedge\bigvee A &= \bigvee_{a\in A}(b\wedge a),
	\label{eq:meet-distributes}\\
	b\vee\bigwedge A &= \bigwedge_{a\in A}(b\vee a).
	\label{eq:join-distributes}
\end{align}
In particular, maps obtained by taking the meet with a fixed
element preserve arbitrary joins and hence are Scott continuous.

An open subset $U$ of a topological space $X$ is \emph{regular open} if
\[
U=\Int{X}{\Cl{X}{U}}.
\]
The set $\RO(X)$ of all regular open subsets, ordered by inclusion, is a
complete Boolean algebra; see, for example,~\cite{BennettDuntsch2007}.  For
$\mathcal A\subseteq\RO(X)$,
\begin{align*}
	\bigvee\mathcal A
	&=\Int{X}{\Cl{X}{\bigcup\mathcal A}},\\
	\bigwedge\mathcal A
	&=\Int{X}{\bigcap\mathcal A},\\
	\neg U&=\Int{X}{X\setminus U}.
\end{align*}
Note that finite meets are ordinary intersections.
We shall also use repeatedly the fact that if $O$ is open in $X$, then
$R:=\Int{X}{\Cl{X}{O}}$ is regular open.  Indeed,
$O\subseteq R\subseteq\Cl{X}{O}$, so $O$ and $R$ have the same closure and
therefore $\Int{X}{\Cl{X}{R}}=R$.

\section{Regular open algebras, principal ideals, and retracts}
	
	In this section, we establish the dense-subspace invariance of $\RO(X)$
	and describe regular open algebras of regular open subspaces as principal
	ideals and Scott-continuous retracts.

\begin{lemma}\label{lem:dense}
If $Y$ is a dense subspace of $X$, then
\[
  \rho:\RO(X)\longrightarrow\RO(Y),\qquad \rho(U)=U\cap Y,
\]
is a complete Boolean algebra isomorphism.  Its inverse is
\[
  \eta:\RO(Y)\longrightarrow\RO(X),\qquad
  \eta(V)=\Int{X}{\Cl{X}{V}}.
\]
\end{lemma}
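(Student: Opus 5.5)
Since $\RO(X)$ and $\RO(Y)$ are Boolean algebras ordered by inclusion, it is enough to check three things: $\rho$ and $\eta$ are well defined, both are monotone, and they are mutually inverse. A monotone bijection with monotone inverse is an order isomorphism, and an order isomorphism between complete Boolean algebras preserves all joins, meets and complements. Throughout I will use the two closure facts for a dense subspace $Y\subseteq X$:
\begin{itemize}
\item[(a)] for $O$ open in $X$, $\Cl{X}{O\cap Y}=\Cl{X}{O}$, since every open set meeting $O$ meets $O\cap Y$ by density;
\item[(b)] for $A\subseteq Y$, $\Cl{Y}{A}=\Cl{X}{A}\cap Y$.
\end{itemize}

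\textbf{Well-definedness.} First take $U\in\RO(X)$. The set $U\cap Y$ is open in $Y$. By (b), $\Int{Y}{\Cl{Y}{U\cap Y}}=\Int{Y}{\Cl{X}{U\cap Y}\cap Y}$, and by (a) this equals $\Int{Y}{\Cl{X}{U}\cap Y}$. Let $W$ be open in $X$ with $W\cap Y\subseteq\Cl{X}{U}$. Then $\Cl{X}{W}=\Cl{X}{W\cap Y}\subseteq\Cl{X}{U}$, so $W\subseteq\Int{X}{\Cl{X}{U}}=U$. Hence the $Y$-interior is contained in $U\cap Y$; the reverse inclusion is trivial, so $U\cap Y\in\RO(Y)$.

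Next take $V\in\RO(Y)$ and write $V=O\cap Y$ with $O$ open in $X$. By (a), $\Cl{X}{V}=\Cl{X}{O}$, so $\eta(V)=\Int{X}{\Cl{X}{O}}$. The remark before the lemma says this set is regular open in $X$. Monotonicity of both maps is immediate from their definitions.

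\textbf{Inverse identities.} For $U\in\RO(X)$, $\eta\rho(U)=\Int{X}{\Cl{X}{U\cap Y}}=\Int{X}{\Cl{X}{U}}=U$ by (a).

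For $V\in\RO(Y)$ with $V=O\cap Y$ as above, put $R=\Int{X}{\Cl{X}{O}}$; we must show $R\cap Y=V$. From $O\subseteq R$ we get $V\subseteq R\cap Y$. Conversely, $R\cap Y$ is open in $Y$ and contained in $\Cl{X}{O}\cap Y=\Cl{X}{V}\cap Y=\Cl{Y}{V}$, using (a) and (b). Therefore $R\cap Y\subseteq\Int{Y}{\Cl{Y}{V}}=V$, because $V$ is regular open in $Y$.

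\textbf{Main obstacle.} The only step needing care is the regularity of $U\cap Y$ in $Y$. Its $Y$-interior is taken relative to $Y$, so one must pass to an open set $W$ of $X$ and use density once more to compare closures. Everything else follows routinely from (a) and (b).
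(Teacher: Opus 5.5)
Your proof is correct and follows essentially the same route as the paper. Your fact (a) is the paper's closure identity for open $O$. Your argument with an $X$-open $W$ is an inline version of the paper's interior identity $\operatorname{int}_Y(F\cap Y)=Y\cap\operatorname{int}_X(F)$ for closed $F$; you use only the inclusion you need, including in the check $\rho\eta(V)=V$. Both proofs then conclude by exhibiting $\rho$ and $\eta$ as mutually inverse monotone maps, hence an isomorphism of complete Boolean algebras.
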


\begin{proof}
We first record two consequences of density.

If $F$ is closed in $X$, then
\begin{equation}\label{eq:dense-interior}
  \Int{Y}{F\cap Y}=Y\cap\Int{X}{F}.
\end{equation}
The inclusion from right to left is immediate.  For the converse, suppose that
$y\in\Int{Y}{F\cap Y}$.  There is an open set $O\subseteq X$ with
$y\in O$ and $O\cap Y\subseteq F$.  If $O\setminus F$ were nonempty, it
would be a nonempty open set and would meet the dense set $Y$, contradicting
$O\cap Y\subseteq F$.  Hence $O\subseteq F$ and
$y\in Y\cap\Int{X}{F}$.

If $O$ is open in $X$, then
\begin{equation}\label{eq:dense-closure}
  \Cl{X}{O\cap Y}=\Cl{X}{O}.
\end{equation}
Indeed, if $x\in\Cl{X}{O}$ and $N$ is an open neighborhood of $x$, then
$N\cap O$ is a nonempty open set and therefore meets $Y$.  Thus $N$ meets $O\cap Y$.

Let $U\in\RO(X)$.  Using~\eqref{eq:dense-interior} and
\eqref{eq:dense-closure},
\[
\begin{aligned}
  \Int{Y}{\Cl{Y}{U\cap Y}}
    &=\Int{Y}{Y\cap\Cl{X}{U\cap Y}}\\
    &=Y\cap\Int{X}{\Cl{X}{U}}
     =U\cap Y.
\end{aligned}
\]
Thus $\rho(U)\in\RO(Y)$.

Now let $V\in\RO(Y)$.  Choose an open set $O\subseteq X$ with
$V=O\cap Y$.  By~\eqref{eq:dense-closure},
$\eta(V)=\Int{X}{\Cl{X}{V}}=\Int{X}{\Cl{X}{O}}$, which is regular open.
Equation
\eqref{eq:dense-interior} gives
\[\begin{aligned}
\rho(\eta(V)) 
&= \eta(V)\cap Y
  =Y\cap\Int{X}{\Cl{X}{V}}\\
  &=\Int{Y}{Y\cap \Cl{X}{V}}
  =\Int{Y}{\Cl{Y}{V}}
  =V.\end{aligned}
\]
Finally,~\eqref{eq:dense-closure} yields
\[
  \eta(\rho(U))
  =\Int{X}{\Cl{X}{U\cap Y}}
  =\Int{X}{\Cl{X}{U}}
  =U.
\]
Therefore $\rho$ and $\eta$ are inverse order isomorphisms.  An order isomorphism between complete Boolean algebras preserves all joins, meets, and complements.
\end{proof}

\begin{corollary}\label{cor:dense-sobriety}
	For a topological space $X$, the following statements are equivalent:
	\begin{enumerate}[label=\textup{(\arabic*)}]
		\item $\Scott\RO(X)$ is sober.
		\item For every dense subspace $Y$ of $X$, $\Scott\RO(Y)$ is sober.
		\item There exists a dense subspace $Y$ of $X$ such that
		$\Scott\RO(Y)$ is sober.
	\end{enumerate}
\end{corollary}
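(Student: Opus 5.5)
The corollary follows directly from Lemma~\ref{lem:dense}, together with the fact that sobriety of a Scott space depends only on the order structure. I will prove the cycle of implications (1)$\Rightarrow$(2)$\Rightarrow$(3)$\Rightarrow$(1).

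For (1)$\Rightarrow$(2), let $Y$ be any dense subspace of $X$. By Lemma~\ref{lem:dense}, the map $\rho\colon\RO(X)\to\RO(Y)$, $U\mapsto U\cap Y$, is an order isomorphism. The Scott topology is defined purely in terms of the order: upper sets, directed sets and their suprema. Hence an order isomorphism $\phi\colon P\to Q$ carries Scott open sets of $P$ exactly onto Scott open sets of $Q$. To see this, note that $\phi$ maps upper sets to upper sets and directed sets to directed sets, preserves existing directed suprema, and $\phi^{-1}$ does the same. Therefore $\rho$ is a homeomorphism $\Scott\RO(X)\to\Scott\RO(Y)$. Sobriety is a topological invariant, since homeomorphisms preserve $T_0$, irreducible closed sets, and point closures. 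So $\Scott\RO(Y)$ is sober.

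Alternatively, one can invoke Remark~\ref{rem:standard-facts}(1). Both $\rho$ and $\eta$ are Scott continuous, being monotone and preserving directed joins as isomorphisms of complete lattices, and $\rho\circ\eta=\operatorname{id}$. So $\Scott\RO(Y)$ is a retract of $\Scott\RO(X)$, and a retract of a sober space is sober.

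For (2)$\Rightarrow$(3), take $Y=X$, which is trivially dense in itself. For (3)$\Rightarrow$(1), fix a dense $Y$ with $\Scott\RO(Y)$ sober. Apply the same argument with the roles reversed: the map $\eta\colon\RO(Y)\to\RO(X)$ from Lemma~\ref{lem:dense} is an order isomorphism with inverse $\rho$, so it is a Scott homeomorphism, or again a section of a retraction. Hence $\Scott\RO(X)$ is sober.

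There is no real obstacle here. The only point needing a sentence of justification is that an order isomorphism induces a homeomorphism of Scott spaces, which is immediate from the order-theoretic definition of Scott openness.
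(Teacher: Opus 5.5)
Your proof is correct and follows the paper's argument. Lemma~\ref{lem:dense} gives an order isomorphism $\RO(X)\cong\RO(Y)$ for every dense subspace $Y$, hence a homeomorphism of the Scott spaces, which yields (1)$\Rightarrow$(2) and (3)$\Rightarrow$(1), while (2)$\Rightarrow$(3) follows by taking $Y=X$; your retract-based alternative is also valid but not needed.
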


\begin{proof}
By Lemma~\ref{lem:dense}, every dense subspace $Y$ of $X$ satisfies
$\RO(Y)\cong\RO(X)$, and hence the corresponding Scott spaces are
homeomorphic.  Thus (1) implies (2), while (3) implies (1).  Finally, (2)
implies (3) by taking $Y=X$.
\end{proof}

For a complete Boolean algebra $B$ and $u\in B$, define
\[
  B_{\le u}:=\{b\in B:b\leq u\}.
\]
With the inherited order, $B_{\leq u}$ is a complete Boolean algebra whose
least and greatest elements are $0$ and $u$, respectively.  Its joins and
nonempty meets are computed as in $B$ (the empty meet is $u$), and the
complement of $b\leq u$ relative to $u$ is $u\wedge\neg b$.

We shall also need the following elementary principal-ideal identification.

\begin{lemma}\label{lem:principal}
	Let $U\in\RO(X)$ be endowed with the subspace topology, and define
	\[
	\RO(X)_{\le U}
	:=\{V\in\RO(X):V\subseteq U\}.
	\]
	Then, as collections of subsets of $U$,
	\[
	\RO(U)=\RO(X)_{\le U}.
	\]
	In particular, the identity map is a complete Boolean algebra isomorphism.
\end{lemma}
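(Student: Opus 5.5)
We show directly that a subset $V\subseteq U$ is regular open in the subspace $U$ if and only if it is regular open in $X$. The key fact is that $U$ is open in $X$, so for any $A\subseteq U$ we have
\[
\Cl{U}{A}=U\cap\Cl{X}{A},
\]
and for any subset $S\subseteq U$ we have $\Int{U}{S}=\Int{X}{S}$, since open subsets of $U$ are exactly the open subsets of $X$ contained in $U$. In particular, the open subsets of $U$ are exactly the open subsets of $X$ lying inside $U$, so both collections consist of open sets of $X$ contained in $U$, and only the regularity conditions need to be compared.

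First, for an open $V\subseteq U$ we compute
\[
\Int{U}{\Cl{U}{V}}=\Int{X}{U\cap\Cl{X}{V}}=U\cap\Int{X}{\Cl{X}{V}},
\]
where the last step uses that $U$ is open, so $\Int{X}{U\cap F}=U\cap\Int{X}{F}$. Hence $V\in\RO(U)$ if and only if $V=U\cap\Int{X}{\Cl{X}{V}}$.

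Second, since $V\subseteq U$ and $U$ is regular open in $X$, we have
\[
\Int{X}{\Cl{X}{V}}\subseteq\Int{X}{\Cl{X}{U}}=U .
\]
Thus $U\cap\Int{X}{\Cl{X}{V}}=\Int{X}{\Cl{X}{V}}$. The condition from the first step therefore becomes $V=\Int{X}{\Cl{X}{V}}$, i.e.\ $V\in\RO(X)$. This proves $\RO(U)=\RO(X)_{\le U}$ as collections of subsets of $U$.

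Both sides are ordered by inclusion, so the identity map is an order isomorphism between complete Boolean algebras. As noted at the end of the proof of Lemma~\ref{lem:dense}, such a map preserves all joins, meets and complements; the complement on the right-hand side is the relative one, $U\wedge\neg V$. The only point needing care is the second step: regularity of $U$ in $X$ is what lets us drop the intersection with $U$. This is exactly where the hypothesis $U\in\RO(X)$, rather than merely $U$ open, is used.
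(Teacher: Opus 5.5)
Your proof is correct and follows essentially the same route as the paper. Both arguments use openness of $U$ to get $\Int{U}{\Cl{U}{V}}=U\cap\Int{X}{\Cl{X}{V}}$, then use regularity of $U$, via $\Int{X}{\Cl{X}{V}}\subseteq\Int{X}{\Cl{X}{U}}=U$, to drop the intersection, and conclude with the order-isomorphism argument; the only difference is that you merge the paper's two directions into a single ``if and only if'' chain.
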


\begin{proof}
Since $U$ is open in $X$, every open subset of $U$ is open in $X$, and for
every $V\subseteq U$ one has
\[
  \Int{U}{\Cl{U}{V}}
  =U\cap\Int{X}{\Cl{X}{V}}.
\]
Indeed, $\Cl{U}{V}=U\cap\Cl{X}{V}$, and relative interior in the open
subspace $U$ is obtained by intersecting with $U$.  If
$V\in\RO(U)$, then $V\subseteq U$ and
\[
  \Int{X}{\Cl{X}{V}}\subseteq\Int{X}{\Cl{X}{U}}=U.
\]
Also
\[
  V=\Int{U}{\Cl{U}{V}}
   =U\cap\Int{X}{\Cl{X}{V}}
   =\Int{X}{\Cl{X}{V}},
\]
so $V\in\RO(X)$ and $V\subseteq U$.

Conversely, if $V\in\RO(X)$ and $V\subseteq U$, then
\[
  \Int{U}{\Cl{U}{V}}
  =U\cap\Int{X}{\Cl{X}{V}}
  =U\cap V
  =V.
\]
Thus the two ordered collections coincide.  Since the identity is an order
isomorphism between complete Boolean algebras, it preserves all Boolean
operations.
\end{proof}

\begin{lemma}\label{lem:principal-retract}
	Let $B$ be a complete Boolean algebra and let $u\in B$. Then
	$\Scott(B_{\le u})$ is a topological retract of $\Scott B$.
\end{lemma}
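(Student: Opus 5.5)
The plan is to write down the section and the retraction explicitly and to check that both are Scott continuous. For the section take the inclusion map $s\colon B_{\le u}\to B$, $s(b)=b$. For the retraction take $r\colon B\to B_{\le u}$, $r(b)=u\wedge b$. Clearly $r(b)\le u$, so $r$ lands in $B_{\le u}$, and for $b\le u$ we get $r(s(b))=u\wedge b=b$. Hence $r\circ s=\operatorname{id}_{B_{\le u}}$. It then remains to verify continuity with respect to the Scott topologies on both sides.

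Both $B$ and $B_{\le u}$ are complete lattices, hence dcpos. I will therefore use the criterion recorded in the preliminaries: a map between dcpos is Scott continuous if and only if it is monotone and preserves directed suprema.

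For $s$, monotonicity is obvious. The key fact is that directed joins in $B_{\le u}$ are computed as in $B$. Indeed, if $D\subseteq B_{\le u}$ is directed, then its join $\bigvee D$ in $B$ lies below $u$, so it belongs to $B_{\le u}$. It is therefore also the join of $D$ there, which gives $s(\bigvee D)=\bigvee s(D)$.

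For $r$, monotonicity follows from monotonicity of meets. Preservation of directed joins is the infinitary distributive law \eqref{eq:meet-distributes}, which gives
\[
r\Bigl(\bigvee D\Bigr)=u\wedge\bigvee D=\bigvee_{d\in D}(u\wedge d).
\]
The right-hand side is the join of $r(D)$ in $B$. By the previous remark it is also the join in $B_{\le u}$, since all the $u\wedge d$ lie below $u$. Thus $r$ is Scott continuous. This is exactly the remark after \eqref{eq:meet-distributes} that meeting with a fixed element is Scott continuous.

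The only point requiring any care is the one already addressed: Scott continuity of $s$ and $r$ must be taken with respect to joins computed in the appropriate lattice. This is resolved by noting that $B_{\le u}$ is closed under arbitrary joins in $B$. With that observed, $\Scott(B_{\le u})$ is a retract of $\Scott B$ via $s$ and $r$.
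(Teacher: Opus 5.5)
Your proposal is correct and follows essentially the same route as the paper: the inclusion $B_{\le u}\to B$ serves as the section, and $b\mapsto u\wedge b$ serves as the retraction. The paper gets Scott continuity of $r$ from $u\wedge(-)$ preserving arbitrary joins (as a left adjoint to $\neg u\vee(-)$), which is the same infinite distributive law you invoke. For the inclusion, both arguments use the fact that joins in $B_{\le u}$ are computed in $B$.
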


\begin{proof}
	Let
	\[
	i\colon B_{\le u}\longrightarrow B
	\]
	be the inclusion map. Since directed suprema in $B_{\le u}$ agree with
	those computed in $B$, the map $i$ is Scott continuous.
	
	Define
	\[
	r\colon B\longrightarrow B_{\le u},
	\qquad
	r(b)=b\wedge u.
	\]
	For every $A\subseteq B$, meeting with $u$ preserves arbitrary joins,
	because $u\wedge(-)$ is a left adjoint to $\neg u\vee(-)$.  Hence
	\[
	r\left(\bigvee A\right)
	=u\wedge\bigvee A
	=\bigvee_{a\in A}(u\wedge a)
	=\bigvee_{a\in A}r(a).
	\]
	Thus $r$ preserves arbitrary joins and is therefore Scott continuous.
	Moreover, for every $b\in B_{\le u}$,
	\[
	r(i(b))=b\wedge u=b.
	\]
	Hence $r\circ i=\operatorname{id}_{B_{\le u}}$, so the induced continuous
	maps exhibit $\Scott(B_{\le u})$ as a topological retract of $\Scott B$.
\end{proof}

\begin{proposition}\label{prop:localtransfer}
	Let $B$ be a complete Boolean algebra and let $u\in B$. If $\Scott B$ is
	sober, then $\Scott(B_{\le u})$ is sober.
\end{proposition}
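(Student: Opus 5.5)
The proposition follows by combining Lemma~\ref{lem:principal-retract} with Remark~\ref{rem:standard-facts}(1). Lemma~\ref{lem:principal-retract} gives Scott continuous maps $i\colon B_{\le u}\to B$ (the inclusion) and $r\colon B\to B_{\le u}$, $r(b)=b\wedge u$, with $r\circ i=\operatorname{id}_{B_{\le u}}$. So $\Scott(B_{\le u})$ is a topological retract of $\Scott B$. If $\Scott B$ is sober, Remark~\ref{rem:standard-facts}(1) then says that $\Scott(B_{\le u})$ is sober.

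The only thing to check is that the retract fact applies in the form we need: that $\Scott(B_{\le u})$ is $T_0$ and that irreducible closed sets there are point closures. $T_0$ is automatic for any Scott space, since $\da x=\da y$ forces $x=y$. For safety I would also sketch the transfer argument directly rather than only citing it.

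Let $A\subseteq B_{\le u}$ be irreducible and Scott closed. Its image $i(A)$ is irreducible in $\Scott B$, because continuous images of irreducible sets are irreducible, and hence so is its closure $C=\Cl{\Scott B}{i(A)}$. By sobriety of $\Scott B$ there is $c\in B$ with $C=\da c$. Put $a:=r(c)=c\wedge u\in B_{\le u}$.

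We show $A=\da a$ in $B_{\le u}$ (the closure of $\{a\}$, by Remark~\ref{rem:standard-facts}(2)).

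\emph{First, $A\subseteq\da a$.} Every $x\in A$ lies in $C=\da c$, so $x\le c$. Also $x\le u$, hence $x\le c\wedge u=a$.

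\emph{Second, $a\in A$.} Continuity of $r$ gives $r(C)\subseteq\Cl{}{r(i(A))}=\Cl{}{A}=A$. Since $c\in C$, we get $a=r(c)\in A$, and so $\da a\subseteq A$ because $A$ is a lower set.

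Uniqueness of $a$ follows from $T_0$. No step here is a real obstacle; the only point needing care is the inclusion $r(\Cl{}{S})\subseteq\Cl{}{r(S)}$, which is just continuity of $r$.
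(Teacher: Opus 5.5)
Your proposal is correct and matches the paper's proof, which likewise combines Lemma~\ref{lem:principal-retract} with the fact from Remark~\ref{rem:standard-facts}(1) that retracts of sober spaces are sober. Your direct check, taking $a=c\wedge u$ and using $r(\cl(S))\subseteq\cl(r(S))$, correctly unpacks that standard fact in this special case; the paper simply cites it.
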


\begin{proof}
	By Lemma~\ref{lem:principal-retract}, $\Scott(B_{\le u})$ is a topological
	retract of $\Scott B$. Therefore, if $\Scott B$ is sober, then
	$\Scott(B_{\le u})$ is sober by Remark~\ref{rem:standard-facts}.
\end{proof}

\section{A characterization of sobriety for regular open algebras}

In this section, for a second-countable $T_3$ space $X$, we characterize
the sobriety of the Scott space $\Scott\RO(X)$ in terms of the density of
the isolated points of $X$.

\subsection{Countable vanishing bases and complementary pairs}

We first introduce the countable vanishing-base property.  Its abstract form
is essential for the subsequent topological reduction.

\begin{definition}
A complete Boolean algebra $B$ has the \emph{countable vanishing-base
property} if there is a sequence $(g_n)_{n\in\mathbb N}$ in
$B\setminus\{0\}$ such that:
\begin{enumerate}[label=\textup{(\roman*)},leftmargin=2.8em]
  \item for every $b>0$ there is an $n$ with $g_n\leq b$;
  \item for every $n$ there is a decreasing sequence
  $(a_{n,j})_{j\in\mathbb N}$ of nonzero elements below $g_n$ with
  $\bigwedge_j a_{n,j}=0$.
\end{enumerate}
\end{definition}

\begin{lemma}\label{lem:vanishing-ro}
Let $Y$ be a nonempty second-countable $T_3$ space.  Then $\RO(Y)$ has the
countable vanishing-base property if and only if $Y$ has no isolated points.
\end{lemma}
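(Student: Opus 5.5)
We prove the two directions separately; the "if" direction needs the actual construction.

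\emph{Only if.} Suppose $\RO(Y)$ has the countable vanishing-base property and some $p\in\Iso(Y)$ exists. Since $Y$ is $T_1$, $\{p\}$ is closed as well as open, so $\{p\}=\Int{Y}{\Cl{Y}{\{p\}}}$ is a nonzero regular open set. By (i) there is an $n$ with $0<g_n\leq\{p\}$, which forces $g_n=\{p\}$. Then (ii) gives nonzero $a_{n,j}\leq\{p\}$, so every $a_{n,j}=\{p\}$. Hence $\bigwedge_j a_{n,j}=\{p\}\neq 0$, a contradiction. So $Y$ has no isolated points.

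\emph{If.} Assume $Y$ has no isolated points. Fix a countable base $\mathcal B$ of $Y$ and let $(g_n)_{n\in\mathbb N}$ enumerate the nonempty sets $\Int{Y}{\Cl{Y}{B}}$ with $B\in\mathcal B$, repeating terms if the list is finite. Each such set is regular open and nonempty.

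\emph{Condition (i).} Let $U\in\RO(Y)$ be nonempty and pick $x\in U$. By regularity there is an open $V$ with $x\in V\subseteq\Cl{Y}{V}\subseteq U$. Choose $B\in\mathcal B$ with $x\in B\subseteq V$. Then
\[
\Int{Y}{\Cl{Y}{B}}\subseteq\Int{Y}{\Cl{Y}{V}}\subseteq\Int{Y}{U}=U,
\]
so this $g_n$ lies below $U$.

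\emph{Condition (ii).} Fix $n$ and a point $x\in g_n$. Since $Y$ is second countable, $x$ has a countable neighbourhood base, and since $x$ is not isolated and $Y$ is $T_1$, $\bigcap\{\text{neighbourhoods of }x\}=\{x\}$ is not open. Using regularity, build open sets $W_0\supseteq W_1\supseteq\cdots$ containing $x$ with
\[
\Cl{Y}{W_{j+1}}\subseteq W_j\subseteq g_n
\]
and $W_j$ contained in the $j$-th basic neighbourhood of $x$. Put $a_{n,j}=\Int{Y}{\Cl{Y}{W_j}}$. These sets are nonzero, decreasing, and lie below $g_n$. Moreover
\[
a_{n,j+1}\subseteq\Cl{Y}{W_{j+1}}\subseteq W_j,
\]
so $\bigcap_j a_{n,j}\subseteq\bigcap_j W_j=\{x\}$, because $T_1$ together with the shrinking neighbourhood base forces the intersection to be $\{x\}$. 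In $\RO(Y)$ the meet is $\Int{Y}{\bigcap_j a_{n,j}}\subseteq\Int{Y}{\{x\}}=\emptyset$, since $x$ is not isolated. Hence $\bigwedge_j a_{n,j}=0$.

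The main obstacle is this last step: it needs the meet formula $\bigwedge\mathcal A=\Int{Y}{\bigcap\mathcal A}$ together with the interlaced closures, so that $\bigcap_j a_{n,j}$ really is trapped inside $\bigcap_j W_j$. Everything else is routine.
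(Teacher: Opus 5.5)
Your proof is correct. The ``only if'' direction coincides with the paper's. The skeleton of the ``if'' direction is also the same. The $g_n$ are the regularizations $\Int{Y}{\Cl{Y}{E}}$ of nonempty basic open sets. For (ii) one fixes $x\in g_n$ and produces a decreasing sequence of nonzero regular open sets below $g_n$ whose intersection is $\{x\}$. One then concludes via $\bigwedge_j a_{n,j}=\Int{Y}{\bigcap_j a_{n,j}}\subseteq\Int{Y}{\{x\}}=\emptyset$.

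The difference is the tool used to make the sequence shrink. The paper first invokes the Urysohn metrization theorem and regularizes metric balls $B_d(x_n,2^{-j}\varepsilon_n)$, using that the closure of a ball lies in the closed ball. You instead use regularity and a countable neighbourhood base at $x$ directly, with the interlacing $\Cl{Y}{W_{j+1}}\subseteq W_j$ playing the role of the closed-ball containment.

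Your version is more elementary and shows what is really needed: a countable base (indeed a countable $\pi$-base) for (i), and for (ii) regularity, $T_1$, and a countable neighbourhood base at one point of each $g_n$. The paper's metric version is more concrete, at the cost of an external theorem.

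A minor simplification is available to both arguments. In (i), neither regularity nor a metric is needed: any basic $B$ with $x\in B\subseteq U$ already satisfies $\Int{Y}{\Cl{Y}{B}}\subseteq\Int{Y}{\Cl{Y}{U}}=U$, because $U$ is regular open.
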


\begin{proof}
Suppose first that $Y$ has no isolated points.  By the Urysohn metrization
theorem~\cite[Chapter~I, Section~9]{Bredon1993}, $Y$ is metrizable; fix a
compatible metric $d$ on $Y$.  By second-countability, there is a countable
base $(E_n)_{n\in\mathbb N}$ consisting of nonempty open sets.
Define
\[
  G_n:=\Int{Y}{\Cl{Y}{E_n}}.
\]
Since $E_n\subseteq G_n\subseteq\Cl{Y}{E_n}$, the sets $E_n$ and $G_n$
have the same closure.  Hence
\[
  \Int{Y}{\Cl{Y}{G_n}}=G_n,
\]
so $G_n$ is a nonzero member of $\RO(Y)$.

We next verify condition~\textup{(i)}.  Let $\emptyset\neq W\in\RO(Y)$ and choose
$y\in W$.  There is a $\delta>0$ such that
\[
  B_d(y,2\delta)\subseteq W.
\]
Choose $n$ with $y\in E_n\subseteq B_d(y,\delta)$.  Since the closure of
$B_d(y,\delta)$ is contained in the closed ball of radius $\delta$, we have
\[
  \emptyset\neq G_n
  \subseteq\Cl{Y}{E_n}
  \subseteq B_d(y,2\delta)
  \subseteq W.
\]

For each $n$, choose $x_n\in G_n$ and $\varepsilon_n>0$ such that
$B_d(x_n,2\varepsilon_n)\subseteq G_n$.  For $j\in\mathbb N$, put
\[
  A_{n,j}
  :=\Int{Y}{\Cl{Y}{B_d(x_n,2^{-j}\varepsilon_n)}}.
\]
As above, every $A_{n,j}$ is regular open.  The sequence
$(A_{n,j})_{j\in\mathbb N}$ is decreasing, and
\[
  \emptyset\neq A_{n,j}\subseteq G_n
  \qquad (j\in\mathbb N).
\]
Indeed, the closure of $B_d(x_n,2^{-j}\varepsilon_n)$ is contained in the
closed ball of the same radius, which is contained in
$B_d(x_n,2\varepsilon_n)$.  Moreover,
\[
  \bigcap_{j\in\mathbb N}A_{n,j}=\{x_n\}.
\]
Indeed, $x_n\in A_{n,j}$ for every $j$, whereas
$A_{n,j}\subseteq\overline{B_d(x_n,2^{-j}\varepsilon_n)}$; every point
different from $x_n$ is excluded by the latter closed ball for all
sufficiently large $j$.
Since $x_n$ is not isolated, $\Int{Y}{\{x_n\}}=\emptyset$.  The formula for
arbitrary meets in $\RO(Y)$ therefore gives
\[
  \bigwedge_{j\in\mathbb N}A_{n,j}
  =\Int{Y}{\bigcap_{j\in\mathbb N}A_{n,j}}
  =\emptyset.
\]
Thus condition~\textup{(ii)} also holds.

Conversely, suppose that $\RO(Y)$ has the countable vanishing-base property
and that $y\in Y$ is isolated.  Since $Y$ is $T_1$, the set
$\{y\}$ is clopen and belongs to $\RO(Y)$.  It has no nonzero proper element
below it.  Condition~\textup{(i)} gives an $n$ such that
\[
  \emptyset\neq G_n\leq\{y\}.
\]
Therefore $G_n=\{y\}$.  Every nonzero element below $G_n$ equals $G_n$, so
the sequence in condition~\textup{(ii)} must satisfy
\[
  \bigwedge_j A_{n,j}=G_n\neq0,
\]
a contradiction.  Hence $Y$ has no isolated points.
\end{proof}

A nonempty subset $F$ of a poset $P$ is \emph{filtered} if for all $x,y\in F$
there is a $z\in F$ with $z\leq x$ and $z\leq y$.  A subset $N$ of  $P$ is \emph{dual Scott open} if 
(i) $N=\mathord{\downarrow}N$ and (ii) for each filtered subset $F$ of $P$ with $\bigwedge F$ existing, $\bigwedge F\in N$ implies $F\cap
N\neq\emptyset$.
Now let $B$ be a complete Boolean algebra.
Since Boolean complementation is an order anti-isomorphism, a subset
$U\subseteq B$ is Scott open if and only if
\[
\neg U:=\{\neg u:u\in U\}
\]
is dual Scott open.

The following complementary-pair theorem drives the order-theoretic
obstruction.

\begin{theorem}\label{thm:mask}
Let $B$ be a complete Boolean algebra with the countable vanishing-base
property.  If $U,V\subseteq B$ are nonempty Scott-open sets, then there
exist $u\in U$ and $v\in V$ such that
\[
  u\wedge v=0,
  \qquad
  u\vee v=1.
\]
\end{theorem}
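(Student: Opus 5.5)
The plan is to reduce the statement to producing \emph{disjoint} elements $u\in U$ and $v\in V$. Indeed, if $u\in U$, $v\in V$ and $u\wedge v=0$, then $v\le\neg u$. Since $V$ is an upper set, $\neg u\in V$, and the pair $(u,\neg u)$ is complementary. So the whole task is to separate $U$ and $V$ by disjoint elements, and this is where the countable vanishing-base property enters.

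The basic tool is a ``carving'' observation. Let $(g_n)$ and $(a_{n,j})$ be as in the definition. For any $c\in B$ and any $n$, the family $(c\wedge\neg a_{n,j})_{j}$ is increasing. By \eqref{eq:meet-distributes} its join is
\[
c\wedge\neg\bigwedge_j a_{n,j}=c\wedge 1=c .
\]
Hence if $c$ lies in a Scott-open set $W$, then $c\wedge\neg a_{n,j}\in W$ for all large $j$. Thus from any element of $U$ (or $V$) we may delete a nonzero piece $a_{n,j}\le g_n$, chosen as small as we like, and remain in $U$ (or $V$). I would apply this with functions $f\colon\mathbb N\to\mathbb N$, writing
\[
s_f:=\bigvee_n a_{n,f(n)},\qquad e_f:=\neg s_f=\bigwedge_n\neg a_{n,f(n)} .
\]
Condition (i) of the definition makes $s_f$ meet every nonzero $b$, so $s_f$ is dense and $e_f=0$. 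For that reason I would instead split $\mathbb N$ into two infinite disjoint sets $I$ and $J$. I would take $u$ to be a meet of the form $\bigwedge_{n\in I}\neg a_{n,f(n)}$ built from $1\in U$, and $v$ the analogous meet over $J$ built from $1\in V$. At the same time I would insert deliberately chosen pieces so that every base element $g_n$ is ``claimed'' by exactly one side, which should force $u\wedge v=0$.

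Concretely, I would run a diagonal induction along an enumeration of the $g_n$, handled alternately for $U$ and $V$. At stage $k$ we hold $u_k\in U$ and $v_k\in V$. If $g_k\le u_k\wedge v_k$, we use the carving observation to delete a nonzero piece below $g_k$ from one of them. The deletion is chosen so that the finitely many commitments made so far are preserved, and the overlaps $u_k\wedge v_k$ decrease. After all stages, no $g_n$ lies below the overlap, so by condition (i) the final overlap is $0$.

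The main obstacle is that a decreasing sequence in a Scott-open set need not have its meet in the set. To overcome this, I would reorganize the construction so that the final $u$ is a \emph{directed join}: the join of an increasing family of elements in which the deletions are made smaller and smaller, via larger indices $j$. Upward closure then puts $u$ in $U$ as soon as one member of the family lies in $U$. Equivalently, I would aim to prove $\bigwedge_f s_f=0$ over a suitable directed family of functions $f$. Then $\bigvee_f e_f=1$, so $e_f\in U$ for some $f$, and symmetrically for $V$ on the complementary index set. I expect verifying that this meet vanishes, and making the two sides' choices compatible, to be the hardest step. The first thing I would try is to exploit condition (ii) for each $g_n$ separately and then combine the countably many choices diagonally.
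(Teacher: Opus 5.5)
There is a genuine gap. Your reduction to disjoint elements is fine, and your carving observation is correct; it is exactly the inductive step of the paper's proof. But the proposal never produces $u$ and $v$, and the endgame you sketch cannot work.

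With $s_f$ taken over all $n$, you showed yourself that $s_f=1$ for every $f$. So $\bigwedge_f s_f=1$ and $\bigvee_f e_f=0$, not $1$. Suppose instead you restrict to a partition $\mathbb N=I\sqcup J$, with $s^I_f=\bigvee_{n\in I}a_{n,f(n)}$ and $s^J_g$ defined likewise. The plan then needs $\bigvee_f\neg s^I_f=1$ and $\bigvee_g\neg s^J_g=1$, since $1$ is the only element known to lie in $U$ and in $V$. This fails on at least one side:
\begin{itemize}
\item Either some $b>0$ has the property that every nonzero $b'\le b$ lies above some $g_n$ with $n\in I$. Then your density argument gives $s^I_f\ge b$ for all $f$, so $\bigvee_f\neg s^I_f\le\neg b<1$.
\item Or condition (i) forces every nonzero $b'$ to lie above some $g_n$ with $n\in J$. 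Then $s^J_g=1$ for all $g$.
\end{itemize}
The diagonal induction has the defect you already flagged. Its outputs are infinite meets of members of $U$ (resp.\ $V$), and Scott-open sets need not contain such meets.

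The missing idea is to use Scott-openness asymmetrically. Then $e_f=0$ becomes the key fact rather than an obstacle. Carve only on the $U$ side and keep only finite stages. Put $c_{-1}=1\in U$, and, given $c_{n-1}\in U$, use your carving observation to choose $j_n$ with $c_n:=c_{n-1}\wedge\neg a_{n,j_n}\in U$. Then $\bigwedge_n c_n=e_f=0$ for $f(n)=j_n$, so the increasing sequence $(\neg c_n)$ has join $1$. This join lies in $V$ because $V$ is a nonempty upper set. Scott-openness of $V$ now gives an $n$ with $\neg c_n\in V$, and $u=c_n$, $v=\neg c_n$ are the required complementary pair. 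No splitting of $\mathbb N$, no diagonal bookkeeping, and no infinite meet inside $U$ is needed. This is precisely the paper's argument, phrased there via the dual Scott-open sets $\neg U,\neg V$ and the increasing sequence $b_n=\neg c_n$.
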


\begin{proof}
Let $(g_n)_{n\in\mathbb N}$ and
$(a_{n,j})_{n,j\in\mathbb N}$ witness the countable vanishing-base property.
Since a nonempty Scott-open set is an upper set, it contains $1$.  Therefore
\[
  N:=\{\neg u:u\in U\},
  \qquad
  M:=\{\neg v:v\in V\}
\]
are dual Scott open neighborhoods of $0$.

Construct recursively an increasing sequence $(b_n)$ in $N$.  Put $b_{-1}=0$.
Suppose $b_{n-1}\in N$.  The sequence
$(b_{n-1}\vee a_{n,j})_{j\in\mathbb N}$ is decreasing and therefore
filtered.  In a complete Boolean algebra, it holds that
\[
  \bigwedge_j(b_{n-1}\vee a_{n,j})
  =b_{n-1}\vee\bigwedge_j a_{n,j}
  =b_{n-1}\in N.
\]
Because $N$ is dual Scott open and contains this infimum, there exists $j_n$
such that
\[
  b_n:=b_{n-1}\vee a_{n,j_n}\in N.
\]

We claim that $\bigvee_n b_n=1$.  Let $b=\bigvee_n b_n$.  If $b<1$, then
$\neg b>0$.  By condition~\textup{(i)}, there is an index $m$ such that
$g_m\leq\neg b$.  But
\[
  0<a_{m,j_m}\leq g_m\leq\neg b,
  \qquad
  a_{m,j_m}\leq b_m\leq b,
\]
which implies $a_{m,j_m}\leq b\wedge\neg b=0$, a contradiction.

Thus $\bigwedge_n\neg b_n=0$.  The sequence $(\neg b_n)_{n\in\mathbb N}$
is decreasing and therefore filtered.  Since $M$ is dual Scott open and
contains $0$, there exists $n$ such that $\neg b_n\in M$.  The relations
\[
  b_n\in N=\neg U,
  \qquad
  \neg b_n\in M=\neg V
\]
mean precisely that $\neg b_n\in U$ and $b_n\in V$.  Take
$u=\neg b_n$ and $v=b_n$.
\end{proof}

\subsection{An irreducible Scott-closed witness}

We now construct the irreducible Scott-closed set.  The next three lemmas
establish its required properties.

For a complete Boolean algebra $B$, give $B^2$ the coordinatewise order and define
\[
  \Delta_B:=\{(p,q)\in B^2:p\wedge q=0\}.
\]
In particular, $(0,0)\in\Delta_B$, so $\Delta_B$ is nonempty.

\begin{lemma}\label{lem:deltaclosed}
$\Delta_B$ is Scott closed in the product poset $B^2$.
\end{lemma}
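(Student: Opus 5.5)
We use the criterion recalled in the preliminaries: in a dcpo, a subset is Scott closed exactly when it is a lower set closed under directed suprema. Since $B$ is a complete lattice, so is $B^2$ with the coordinatewise order, and directed suprema in $B^2$ are computed coordinatewise. Hence it suffices to check two properties of $\Delta_B$: that it is a lower set, and that it is closed under directed suprema.

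For the lower-set property, suppose $(p,q)\in\Delta_B$ and $(p',q')\leq(p,q)$. Then $p'\leq p$ and $q'\leq q$, so
\[
p'\wedge q'\leq p\wedge q=0,
\]
and therefore $(p',q')\in\Delta_B$.

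For closure under directed suprema, let $D\subseteq\Delta_B$ be directed. Put $p=\bigvee_{(x,y)\in D}x$ and $q=\bigvee_{(x,y)\in D}y$, so that $\bigvee D=(p,q)$. We must show $p\wedge q=0$, and we apply the infinite distributive law~\eqref{eq:meet-distributes} twice. First,
\[
p\wedge q=\bigvee_{(x,y)\in D}(x\wedge q),
\]
and for each fixed $(x,y)\in D$,
\[
x\wedge q=\bigvee_{(x',y')\in D}(x\wedge y').
\]
It therefore suffices to show $x\wedge y'=0$ for all $(x,y),(x',y')\in D$. Directedness of $D$ gives some $(x'',y'')\in D$ with $x\leq x''$ and $y'\leq y''$. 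Then
\[
x\wedge y'\leq x''\wedge y''=0.
\]
Hence $p\wedge q=0$ and $\bigvee D\in\Delta_B$.

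The only point needing care is the last step: the double distributivity combined with directedness, which is what lets us compare elements taken from two different members of $D$. No other obstacle is expected, and the argument uses nothing about $B$ beyond the infinite distributive law, so it holds for any frame $B$.
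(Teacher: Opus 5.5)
Your proof is correct and follows essentially the same route as the paper: you check that $\Delta_B$ is a lower set, then use directedness to get $x\wedge y'\le x''\wedge y''=0$ for any two members of $D$, and conclude by the infinite distributive law that the coordinatewise supremum lies in $\Delta_B$. Your remark that only the frame distributive law is needed is also accurate.
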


\begin{proof}
It is a lower set.  Let
$D=\{(p_i,q_i):i\in I\}\subseteq\Delta_B$ be directed.  For any $i,j$, choose $k$ with
$(p_i,q_i),(p_j,q_j)\le(p_k,q_k)$.  Then
$p_i\wedge q_j\le p_k\wedge q_k=0$.  Since meeting with a fixed element
preserves arbitrary joins in a complete Boolean algebra,
\[
  \left(\bigvee_i p_i\right)\wedge
  \left(\bigvee_j q_j\right)
  =\bigvee_{i,j}(p_i\wedge q_j)=0.
\]
Thus the directed supremum belongs to $\Delta_B$.
\end{proof}

\begin{lemma}\label{lem:deltairreducible}
Suppose every two nonempty Scott-open subsets of $B$ contain disjoint elements.  Then $\Delta_B$ is irreducible in $\Scott(B^2)$.
\end{lemma}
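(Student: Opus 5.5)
We prove irreducibility with the open-set criterion: if $W_1,W_2$ are Scott-open subsets of $B^2$ that both meet $\Delta_B$, we show that $W_1\cap W_2$ meets $\Delta_B$. The idea is to pass from $B^2$ to $B$ through the diagonal-type sections $b\mapsto(b,\neg b)$ and then apply the hypothesis. Note that $\Delta_B$ is nonempty because it contains $(0,0)$.

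First, enlarge the witnesses in $\Delta_B$ to complementary pairs. Pick $(p_1,q_1)\in W_1\cap\Delta_B$ and $(p_2,q_2)\in W_2\cap\Delta_B$. Since $p_k\wedge q_k=0$, we have $q_k\le\neg p_k$, so $(p_k,\neg p_k)\ge(p_k,q_k)$. Because $W_k$ is an upper set, $(p_k,\neg p_k)\in W_k$.

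Next, pull the opens back to $B$. Define $s\colon B\to B^2$ by $s(b)=(b,\neg b)$. This map is not monotone, so it cannot be used directly. Instead we use monotone maps with one coordinate frozen. Consider
\[
  U_k:=\{b\in B:(b,\neg p_k)\in W_k\}.
\]
The map $b\mapsto(b,c)$ is Scott continuous, since directed sups in $B^2$ are computed coordinatewise. Hence $U_k$ is Scott open, and $p_k\in U_k$, so $U_k$ is nonempty. This alone does not give pairs that can be combined, so we refine the construction as follows.

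Take the Scott-open set
\[
  U:=\{b\in B:\exists c\ (b,c)\in W_1,\ \text{and}\dots\}.
\]
More cleanly, let
\[
  U:=\{b:(b,\neg p_2)\in W_1\}\cap\dots
\]
Simpler still is to apply the hypothesis in a swapped form. Set
\[
  U:=\{x\in B:(x,\neg p_1)\in W_1\},\qquad V:=\{y\in B:(\neg p_2,y)\in W_2\}.
\]
Both are nonempty Scott-open sets, since $p_1\in U$ and $\neg p_2\in V$. The hypothesis, in the form of Theorem~\ref{thm:mask}, gives $u\in U$ and $v\in V$ with $u\wedge v=0$. We would then like $(u,v)$ to lie in both $W_1$ and $W_2$, but we only know $(u,\neg p_1)\in W_1$ and $(\neg p_2,v)\in W_2$. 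The second coordinates do not match, so this needs adjustment.

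Better is to use a single map, $B\to B^2$ given by $x\mapsto(x\wedge a,\ \neg x\wedge c)$. It is not monotone in its second coordinate, so again we use two variables. Define $f\colon B^2\to B^2$ by $f(x,y)=(x,y)$, and consider the Scott-open sets of $B$
\[
  U:=\{x:(x,q)\in W_1\ \text{for suitable fixed }q\}.
\]
The main obstacle is exactly this: one must choose fixed coordinates so that a disjoint pair $u\wedge v=0$ from the hypothesis lands in $W_1\cap W_2$.

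The plan that resolves this is to apply the hypothesis twice. First, let $U=\{x:(x,q_1)\in W_1\}$ and $V=\{y:(p_2,y)\in W_2\}$ (using the original $q_1$ and $p_2$). Obtain $u\in U$ and $v\in V$ disjoint. Then replace $W_1$ by its subset of points above $(u,\cdot)$ and iterate, alternating coordinates. Alternatively, pass to $B\times B$ viewed through the isomorphism $B^2\cong B_{\le e}\times B_{\le\neg e}$. That isomorphism presents $B^2$ as a complete Boolean algebra, and disjointness there is coordinatewise, so we would use Theorem~\ref{thm:mask}-style complementarity in the form $u\vee v=1$ to absorb the mismatched coordinates via upper-closure. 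This final reconciliation step is the part I would expect to need the most care.
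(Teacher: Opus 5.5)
\paragraph{There is a genuine gap.} None of your constructions produces a point of $W_1\cap W_2\cap\Delta_B$. The ``reconciliation step'' you defer at the end is the entire content of the lemma, and coordinate-freezing pullbacks cannot supply it.

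Take your $U=\{x:(x,\neg p_1)\in W_1\}$ and $V=\{y:(\neg p_2,y)\in W_2\}$. First, $V$ need not be nonempty. The claim $\neg p_2\in V$ means $(\neg p_2,\neg p_2)\in W_2$, which does not follow from $(p_2,\neg p_2)\in W_2$ unless $p_2=0$. For instance, if $W_2=B^2\setminus\mathord{\downarrow}(\neg p_2,1)$ with $p_2\neq 0$, then $W_2$ is Scott open and meets $\Delta_B$, but $V=\emptyset$.

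Second, even when disjoint $u\in U$, $v\in V$ exist, the only candidate available by upper closure is $(u\vee\neg p_2,\ \neg p_1\vee v)$. The meet of its coordinates is $(u\wedge\neg p_1)\vee(\neg p_1\wedge\neg p_2)\vee(\neg p_2\wedge v)$, which $u\wedge v=0$ does not control; it equals $1$ when $p_1=p_2=0$. The variant with $q_1,p_2$ fails in the same way.

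Neither the alternating iteration (no limit argument is given, and none is apparent) nor complementarity $u\vee v=1$ rescues this. Passing to an upper bound can only enlarge coordinates, and so can only enlarge their meet; frozen coordinates that are too large can never be ``absorbed.'' Also, $B_{\le e}\times B_{\le\neg e}$ is isomorphic to $B$, not to $B^2$.

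\paragraph{The missing idea.} Cut the witnesses down by the separating elements instead of freezing coordinates, which means letting the parameter enter both coordinates. Take $(p,q)\in W_1\cap\Delta_B$ and $(r,s)\in W_2\cap\Delta_B$. The maps $t\mapsto(p\wedge t,q\wedge t)$ and $t\mapsto(r\wedge t,s\wedge t)$ preserve arbitrary joins, hence are Scott continuous, and they send $1$ into $W_1$ and $W_2$ respectively. So the preimage $U$ of $W_1$ and the preimage $V$ of $W_2$ are nonempty Scott-open subsets of $B$, and the hypothesis gives $u\in U$, $v\in V$ with $u\wedge v=0$.

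Now set
\[
z=\bigl((p\wedge u)\vee(r\wedge v),\ (q\wedge u)\vee(s\wedge v)\bigr).
\]
It lies above $(p\wedge u,q\wedge u)\in W_1$ and above $(r\wedge v,s\wedge v)\in W_2$, so $z\in W_1\cap W_2$. Expanding the meet of its two coordinates gives four terms, each containing $p\wedge q$, $r\wedge s$, or $u\wedge v$, so $z\in\Delta_B$. This is the paper's argument. Your enlargement of the witnesses to $(p_k,\neg p_k)$ is harmless but plays no role in it.
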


\begin{proof}
Let $O_0,O_1$ be Scott-open subsets of the product poset $B^2$, each meeting $\Delta_B$.  Choose
\[
  x=(p,q)\in O_0\cap\Delta_B,
  \qquad
  y=(r,s)\in O_1\cap\Delta_B.
\]
Thus $p\wedge q=0$ and $r\wedge s=0$.

For $t\in B$, define
\[
  f_x(t)=(p\wedge t,q\wedge t),
  \qquad
  f_y(t)=(r\wedge t,s\wedge t).
\]
Both maps preserve arbitrary joins coordinatewise, hence are Scott continuous.  Therefore
\[
  U=f_x^{-1}(O_0),
  \qquad
  V=f_y^{-1}(O_1)
\]
are nonempty Scott-open subsets of $B$, since both contain $1$.  Choose
$u\in U$ and $v\in V$ with $u\wedge v=0$.

Set
\[
  z=f_x(u)\vee f_y(v)
   =\bigl((p\wedge u)\vee(r\wedge v),
           (q\wedge u)\vee(s\wedge v)\bigr).
\]
Because Scott-open sets are upper sets,
$z\ge f_x(u)\in O_0$ and $z\ge f_y(v)\in O_1$, so
$z\in O_0\cap O_1$.  Expanding the meet of its coordinates gives four terms:
\[
 z_1\wedge z_2
 = (p\wedge q\wedge u)
  \vee(r\wedge s\wedge v)
\vee(p\wedge s\wedge u\wedge v)
  \vee(r\wedge q\wedge u\wedge v)=0.
\]
Hence $z\in\Delta_B$.  Thus every two open sets meeting $\Delta_B$ have an intersection that also meets $\Delta_B$, which is the open-set criterion for irreducibility.
\end{proof}

\begin{lemma}\label{lem:deltanotpoint}
If $B$ is nontrivial, then $\Delta_B$ is not the closure of a point of $\Scott(B^2)$.
\end{lemma}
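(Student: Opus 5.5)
By Remark~\ref{rem:standard-facts}(2), the closure of a point $(a,b)$ in $\Scott(B^2)$ is $\da(a,b)$. So it suffices to show that $\Delta_B$ has no greatest element, since any set of the form $\da(a,b)$ has $(a,b)$ as its greatest element. We argue by contradiction and suppose $\Delta_B=\da(a,b)$ for some $(a,b)\in B^2$.

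The first step is to observe that $(1,0)\in\Delta_B$ and $(0,1)\in\Delta_B$, because $1\wedge 0=0$. Both pairs therefore lie below $(a,b)$ in the coordinatewise order, which forces $a\geq 1$ and $b\geq 1$, that is, $a=b=1$.

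The second step uses that $(a,b)$ itself belongs to $\Delta_B$. This gives $1\wedge 1=0$, so $1=0$, contradicting the nontriviality of $B$. Hence $\Delta_B$ has no greatest element and is not the closure of any point.

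There is no real obstacle in this argument. The only fact needed is the description of point closures in Scott spaces as principal ideals, which Remark~\ref{rem:standard-facts}(2) already supplies; it applies to the product poset $B^2$, which is itself a complete lattice.
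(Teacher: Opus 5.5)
Your proof is correct and is essentially the paper's argument: both observe that $(1,0)$ and $(0,1)$ lie in $\Delta_B$, so any common upper bound must be $(1,1)$, which nontriviality excludes from $\Delta_B$, and both then invoke Remark~\ref{rem:standard-facts}(2) that point closures are principal ideals with a greatest element. The only difference is presentational: you assume $\Delta_B=\da(a,b)$ and derive a contradiction, whereas the paper shows directly that $\Delta_B$ has no largest element.
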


\begin{proof}
Both $(1,0)$ and $(0,1)$ belong to $\Delta_B$, while $(1,1)$ does not.
Any element of $B^2$ dominating both $(1,0)$ and $(0,1)$ must equal
$(1,1)$.  Hence $\Delta_B$ has no largest element.  By
Remark~\ref{rem:standard-facts}, every point closure is a principal ideal
and therefore has a largest element.
\end{proof}

\begin{lemma}\label{lem:selfsum}
Let $D$ be a nonempty countable metrizable space without isolated points.
Then
\[
  D\sqcup D\cong D.
\]
\end{lemma}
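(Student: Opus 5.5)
The plan is to invoke Sierpiński's classical theorem: every nonempty countable metrizable space without isolated points is homeomorphic to $\mathbb Q$. Granting it, $D\cong\mathbb Q$ and $D\sqcup D\cong\mathbb Q\sqcup\mathbb Q$, so it suffices to check three properties of $\mathbb Q\sqcup\mathbb Q$.

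\begin{itemize}
\item It is countable, since it is a union of two countable sets.
\item It is metrizable: fix a metric $d$ on $D$, truncate it at $1$, and set the distance between points in different summands equal to $1$. This gives a compatible metric on the disjoint sum.
\item It has no isolated points. A point of one summand has a basic neighborhood lying entirely inside that summand, where it is open, and that summand has no isolated points.
\end{itemize}

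Applying Sierpiński's theorem again, $D\sqcup D\cong\mathbb Q\cong D$.

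If a more self-contained argument is wanted, use the explicit decomposition $\mathbb Q=(\mathbb Q\cap(-\infty,\sqrt2))\sqcup(\mathbb Q\cap(\sqrt2,\infty))$. Both pieces are clopen in $\mathbb Q$, and each is homeomorphic to $\mathbb Q$. The second claim can itself be obtained from Sierpiński's theorem. Alternatively, one can use an increasing homeomorphism from $(-\infty,\sqrt2)$ onto $\mathbb R$ that maps rationals onto rationals. Such a map can be built by Cantor's back-and-forth method, since both $\mathbb Q\cap(-\infty,\sqrt2)$ and $\mathbb Q$ are countable dense linear orders without endpoints, and an order isomorphism between them is a homeomorphism for the order topologies. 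These order topologies agree with the subspace topologies, because $\sqrt2$ is irrational. Hence $\mathbb Q\cong\mathbb Q\sqcup\mathbb Q$, and therefore $D\sqcup D\cong D$.

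The main obstacle is Sierpiński's theorem itself, which I would quote rather than reprove. If a proof were demanded, the standard route has three steps:
\begin{itemize}
\item Show that $D$ is zero-dimensional. Because $D$ is countable, for each point $x$ all but countably many radii $r$ have empty sphere $\{y:d(x,y)=r\}$, so there are clopen balls of arbitrarily small radius.
\item Build a system of nested partitions of $D$ into countably many nonempty clopen sets of vanishing diameter. Such partitions exist because $D$ has no isolated points.
\item Match these partitions with a corresponding system for $\mathbb Q$ by a back-and-forth construction.
\end{itemize}
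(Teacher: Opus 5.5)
Your proof is correct and follows essentially the same argument as the paper. Both apply Sierpi\'nski's theorem twice, once to $D$ and once to $D\sqcup D$, after checking that the sum is countable, metrizable via the truncated metric with distance $1$ between summands, and without isolated points because each summand is clopen. Your back-and-forth argument for $\mathbb Q\cong\mathbb Q\sqcup\mathbb Q$ is an optional extra, and it still needs Sierpi\'nski's theorem for $D\cong\mathbb Q$, just as the paper does.
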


\begin{proof}
By the Sierpi\'nski theorem, $D$ is homeomorphic to $\mathbb Q$, where
$\mathbb Q$ denotes the set of rational numbers equipped with the subspace
topology inherited from $\mathbb R$; see~\cite{Dashiell2021} for a short
self-contained proof.

The topological sum $D\sqcup D$ is again nonempty and countable.  It is
metrizable: if $d$ is a compatible metric on $D$, put
$d_1(x,y)=\min\{d(x,y),1\}$.  Identifying $D\sqcup D$ with
$D\times\{0,1\}$, define
\[
  \rho\bigl((x,i),(y,j)\bigr)
  :=
  \begin{cases}
    d_1(x,y), & i=j,\\
    1,        & i\neq j.
  \end{cases}
\]
This metric induces the topological-sum topology.  Moreover, neither clopen
copy of $D$ has an isolated point, so $D\sqcup D$ has no isolated points.
The Sierpi\'nski theorem therefore also gives $D\sqcup D\cong\mathbb Q$.
Consequently, $D\sqcup D\cong D$.
\end{proof}

\begin{lemma}\label{lem:selfsquare}
Let $X$ be a nonempty second-countable $T_3$ space without isolated points,
and put $B=\RO(X)$.  Then $B\cong B^2$ as complete Boolean algebras.
\end{lemma}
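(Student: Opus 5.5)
Since $X$ is second-countable, it has a countable base $(E_n)_{n\in\mathbb N}$ of nonempty open sets. Choose one point $d_n\in E_n$ for each $n$ and put $D=\{d_n:n\in\mathbb N\}$. Then $D$ is a countable dense subspace of $X$. By the Urysohn metrization theorem, already used in Lemma~\ref{lem:vanishing-ro}, $X$ is metrizable, and hence so is $D$.

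We next check that $D$ has no isolated points. Suppose $\{d\}=O\cap D$ for some open $O\subseteq X$. Because $X$ has no isolated points and is $T_1$, the set $O\setminus\{d\}$ is a nonempty open set. By density it meets $D$, which is a contradiction. Hence $D$ is a nonempty countable metrizable space without isolated points, and Lemma~\ref{lem:selfsum} gives $D\sqcup D\cong D$.

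Now we transfer this to the regular open algebras. Corollary~\ref{cor:dense-sobriety} rests on Lemma~\ref{lem:dense}, which gives $\RO(X)\cong\RO(D)$ as complete Boolean algebras. A homeomorphism induces an isomorphism of regular open algebras, so $\RO(D)\cong\RO(D\sqcup D)$. It remains to show that for a topological sum $Y_0\sqcup Y_1$ the map
\[
  W\longmapsto (W\cap Y_0,\,W\cap Y_1)
\]
is an order isomorphism $\RO(Y_0\sqcup Y_1)\to\RO(Y_0)\times\RO(Y_1)$.

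For this, observe that each $Y_i$ is clopen in the sum. Therefore, for any $W\subseteq Y_0\sqcup Y_1$, closure and interior are computed componentwise:
\[
  \Cl{Y_0\sqcup Y_1}{W}=\Cl{Y_0}{W\cap Y_0}\cup\Cl{Y_1}{W\cap Y_1},
\]
and similarly for interior. Consequently $W$ is regular open exactly when each $W\cap Y_i$ is regular open in $Y_i$. The inverse map is $(W_0,W_1)\mapsto W_0\cup W_1$. Both maps are monotone, so the map is an order isomorphism. As in Lemma~\ref{lem:dense}, an order isomorphism between complete Boolean algebras preserves all the Boolean operations, and here the product carries the coordinatewise order. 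Taking $Y_0=Y_1=D$ and chaining the isomorphisms gives
\[
  B\cong\RO(D)\cong\RO(D\sqcup D)\cong\RO(D)^2\cong B^2.
\]

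The only step needing real care is the density and no-isolated-points argument for $D$, and it is routine. The heavy lifting is the Sierpiński theorem, already packaged in Lemma~\ref{lem:selfsum}. The remaining steps are short verifications.
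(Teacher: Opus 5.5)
Your proof is correct and follows the paper's argument. Both take a countable dense $D\subseteq X$, get metrizability from Urysohn and $D\sqcup D\cong D$ from Lemma~\ref{lem:selfsum}, and chain $B\cong\RO(D)\cong\RO(D\sqcup D)\cong\RO(D)^2\cong B^2$ via Lemma~\ref{lem:dense} and the componentwise computation on clopen summands. The one difference is that you choose a single point in each basic set and use the general fact that a dense subset of a $T_1$ space without isolated points has none, whereas the paper builds pairwise distinct $d_{n,k}\in E_n$ so that each basic set contains infinitely many points of $D$; your route is simpler and equally valid.
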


\begin{proof}
By second-countability, let $(E_n)_{n\in\mathbb N}$ be a countable base
consisting of nonempty open sets.  Every $E_n$ is infinite.  Indeed, if a
nonempty finite open set
existed, the $T_1$ property would make each of its points open, contrary to
the assumption that $X$ has no isolated points.

Fix a bijection $e:\mathbb N\to\mathbb N^2$, and write
$e(m)=(n_m,k_m)$.  Suppose that, for some $m\in\mathbb N$, the points
\[
  d_{n_j,k_j}\in E_{n_j}
  \qquad (j<m)
\]
have already been chosen and are pairwise distinct.  Since $E_{n_m}$ is
infinite whereas $\{d_{n_j,k_j}:j<m\}$ is finite, we may choose
\[
  d_{n_m,k_m}
  \in E_{n_m}\setminus\{d_{n_j,k_j}:j<m\}.
\]
Starting with $m=0$ and continuing recursively defines pairwise distinct
points
\[
  d_{n,k}\in E_n
  \qquad (n,k\in\mathbb N).
\]
Put $D=\{d_{n,k}:n,k\in\mathbb N\}$.  The set $D$ is countable and dense
in $X$: every nonempty open subset of $X$ contains some basic open set
$E_n$, and hence contains $d_{n,0}$.  It has no isolated points: if $d\in D$
and $O$ is an open
neighborhood of $d$ in $X$, choose $n$ with $d\in E_n\subseteq O$; then
$D\cap O$ contains the infinite set $\{d_{n,k}:k\in\mathbb N\}$.

By the Urysohn metrization
theorem~\cite[Chapter~I, Section~9]{Bredon1993}, $X$ is metrizable, and hence
its subspace $D$ is metrizable.
Thus $D$ is a nonempty countable metrizable space without isolated points.
Lemma~\ref{lem:selfsum} therefore yields $D\sqcup D\cong D$.
Every homeomorphism induces a complete Boolean algebra isomorphism between
the corresponding regular open algebras, by taking direct images.

Writing $D_0$ and $D_1$ for the two clopen copies of $D$ in $D\sqcup D$,
the map
\[
  W\longmapsto (W\cap D_0,W\cap D_1)
\]
is, after the canonical identifications of $D_0$ and $D_1$ with $D$, a
complete Boolean algebra isomorphism
\[
  \RO(D\sqcup D)\cong\RO(D)^2.
\]
Its inverse sends $(U_0,U_1)$ to $U_0\cup U_1$.
Indeed, closure and interior are computed separately on the two clopen
summands.
Since $D$ is dense in $X$, Lemma~\ref{lem:dense} now gives
\[
  B^2
  \cong\RO(D)^2
  \cong\RO(D\sqcup D)
  \cong\RO(D)
  \cong B,
\]
completing the proof.
\end{proof}

\begin{theorem}\label{thm:crowdedobstruction}
If $X$ is a nonempty second-countable $T_3$ space without isolated points,
then $\Scott\RO(X)$ is not sober.
\end{theorem}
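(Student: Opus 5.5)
The proof assembles the pieces already established. Put $B=\RO(X)$. Since $X$ is nonempty, $B$ is nontrivial. By Lemma~\ref{lem:vanishing-ro}, $B$ has the countable vanishing-base property, because $X$ has no isolated points. Theorem~\ref{thm:mask} then shows that any two nonempty Scott-open subsets $U,V\subseteq B$ contain elements $u\in U$, $v\in V$ with $u\wedge v=0$. This is exactly the hypothesis of Lemma~\ref{lem:deltairreducible}.

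We then work in the product poset $B^2$. By Lemma~\ref{lem:deltaclosed}, $\Delta_B$ is Scott closed. By Lemma~\ref{lem:deltairreducible}, it is irreducible in $\Scott(B^2)$. By Lemma~\ref{lem:deltanotpoint}, it is not the closure of a point, since $B$ is nontrivial. Hence $\Scott(B^2)$ is not sober.

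Finally, Lemma~\ref{lem:selfsquare} gives an isomorphism of complete Boolean algebras $\varphi\colon B\to B^2$. An order isomorphism preserves directed suprema in both directions, so it is a homeomorphism $\Scott B\cong\Scott(B^2)$. Sobriety is a topological invariant, so $\Scott B=\Scott\RO(X)$ is not sober.

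The step that needs care is the implicit identification of the Scott topology on the product poset $B^2$ with the object used in the lemmas. Lemmas~\ref{lem:deltaclosed}--\ref{lem:deltanotpoint} are all phrased for $\Scott(B^2)$, the Scott topology of the product order, and not for the product of Scott topologies. The maps $f_x,f_y$ in Lemma~\ref{lem:deltairreducible} are Scott continuous into the product poset, and Remark~\ref{rem:standard-facts}(2) applies to the dcpo $B^2$. So no comparison of topologies is needed, and the transfer through $\varphi$ is purely order-theoretic. I expect no genuine obstacle beyond checking that $\varphi$ is an order isomorphism onto $B^2$ with the coordinatewise order. This holds because it is a complete Boolean algebra isomorphism and the order on $B^2$ is coordinatewise.
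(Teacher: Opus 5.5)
Your proposal is correct and matches the paper's proof step for step: Lemma~\ref{lem:vanishing-ro} gives the vanishing-base property, Theorem~\ref{thm:mask} supplies the hypothesis of Lemma~\ref{lem:deltairreducible}, $\Delta_B$ then shows that $\Scott(B^2)$ is not sober, and the isomorphism $B\cong B^2$ from Lemma~\ref{lem:selfsquare} transfers this to $\Scott B$. Your extra remark is also accurate: everything is stated in the Scott topology of the product order, so no comparison with the product of Scott topologies is needed.
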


\begin{proof}
Put $B=\RO(X)$.  Since $X$ is nonempty, $B$ is nontrivial.
By Lemma~\ref{lem:vanishing-ro}, $B$ has the countable vanishing-base
property.  Theorem~\ref{thm:mask} supplies the disjoint elements required in
Lemma~\ref{lem:deltairreducible}.  Hence, by
Lemmas~\ref{lem:deltaclosed}--\ref{lem:deltanotpoint}, $\Delta_B$ is a
nonempty irreducible Scott-closed subset of $B^2$ that is not the closure of
a point.  Thus $\Scott(B^2)$ is not sober.  Lemma~\ref{lem:selfsquare}
gives an order isomorphism $B\cong B^2$, which is a homeomorphism between
their Scott spaces.  Therefore $\Scott B$ is not sober.
\end{proof}

\subsection{The characterization theorem}

\begin{theorem}\label{thm:localobstruction}
Let $X$ be a topological space.  Suppose there exists a nonempty
$U\in\RO(X)$ such that the subspace $U$ is a second-countable $T_3$ space with
no isolated points.  Then $\Scott\RO(X)$ is not sober.
\end{theorem}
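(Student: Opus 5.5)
The plan is to argue by contraposition, reducing to Theorem~\ref{thm:crowdedobstruction} via principal ideals. Suppose $\Scott\RO(X)$ were sober. Put $B=\RO(X)$ and take $u=U\in B$. By Proposition~\ref{prop:localtransfer}, $\Scott(B_{\le U})$ is then sober, since it is a topological retract of $\Scott B$ (Lemma~\ref{lem:principal-retract}). So the whole argument comes down to identifying $B_{\le U}$ with the regular open algebra of the subspace $U$.

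For that identification I will use Lemma~\ref{lem:principal}. Because $U$ is regular open in $X$, the lemma gives $\RO(U)=\RO(X)_{\le U}=B_{\le U}$ as collections of subsets of $U$. The orders agree, both being inclusion. Hence the identity map is an order isomorphism, and therefore a homeomorphism $\Scott\RO(U)\cong\Scott(B_{\le U})$. It follows that $\Scott\RO(U)$ is sober.

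Now I apply Theorem~\ref{thm:crowdedobstruction} to the space $U$. By hypothesis $U$ is nonempty, second-countable, $T_3$, and has no isolated points in its subspace topology. The theorem therefore says $\Scott\RO(U)$ is not sober, which contradicts the previous step. Hence $\Scott\RO(X)$ is not sober.

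I see no real obstacle here; all the work is in the earlier results. The one point to check is that the hypotheses of Theorem~\ref{thm:crowdedobstruction} are read intrinsically for $U$. This matters most for ``no isolated points'', which must be meant in the subspace $U$, and that is exactly how the statement phrases it. Note also that $X$ itself is not assumed to be second-countable or $T_3$. This causes no difficulty, because Lemma~\ref{lem:principal} and Lemma~\ref{lem:principal-retract} hold for arbitrary spaces and arbitrary complete Boolean algebras.
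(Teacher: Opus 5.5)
Your proposal is correct and follows the paper's proof. Like the paper, you combine Theorem~\ref{thm:crowdedobstruction} applied to the subspace $U$, the identification $\RO(U)=\RO(X)_{\le U}$ from Lemma~\ref{lem:principal}, and the retract transfer of Proposition~\ref{prop:localtransfer}; you merely phrase it as an explicit contradiction.
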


\begin{proof}
Theorem~\ref{thm:crowdedobstruction} shows that $\Scott\RO(U)$ is not sober.
By Lemma~\ref{lem:principal},
$\RO(U)$ is the principal ideal $\RO(X)_{\le U}$.
Proposition~\ref{prop:localtransfer} transfers the non-sobriety to
$\Scott\RO(X)$: otherwise sobriety of $\Scott\RO(X)$ would imply sobriety of
its retract $\Scott\RO(U)$.
\end{proof}

We first relate atomicity of the regular open algebra to isolated points.

Let $B$ be a Boolean algebra. A nonzero element $a\in B$ is an
\emph{atom} if the only element $b\in B$ satisfying $0<b\leq a$ is $a$
itself.  The Boolean algebra $B$ is \emph{atomic} if every nonzero element
of $B$ has an atom below it.

\begin{lemma}\label{lem:atomic-ro}
Let $X$ be a $T_3$ space.  Then the atoms of $\RO(X)$ are precisely the
singletons $\{x\}$ with $x\in\Iso(X)$.  Consequently,
\[
  \RO(X)\text{ is atomic}
  \quad\Longleftrightarrow\quad
  \Iso(X)\text{ is dense in }X.
\]
\end{lemma}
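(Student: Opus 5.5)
I will prove the two claims in turn: first the description of the atoms, then the equivalence with density of $\Iso(X)$.

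\emph{Isolated singletons are atoms.} Let $x\in\Iso(X)$. Since $X$ is $T_1$, $\{x\}$ is closed, and it is open by assumption. A clopen set is regular open, so $\{x\}\in\RO(X)$. Its only subsets are $\emptyset$ and $\{x\}$, hence it is an atom.

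\emph{Every atom is an isolated singleton.} Let $A\in\RO(X)$ be an atom. If $A$ contains two distinct points $x\neq y$, I separate them inside $A$ using $T_1$ and regularity. Since $X\setminus\{y\}$ is open, regularity gives an open $V$ with
\[
x\in V\subseteq \Cl{X}{V}\subseteq A\setminus\{y\}.
\]
Put $R=\Int{X}{\Cl{X}{V}}$, which is regular open by the remark in the preliminaries. It satisfies $x\in V\subseteq R\subseteq \Cl{X}{V}\subseteq A$ and $y\notin R$, so $0<R<A$. This contradicts $A$ being an atom. Hence $A=\{x\}$ is a nonempty open singleton, i.e.\ $x\in\Iso(X)$.

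This separation argument is the only step with real content, and it is where regularity is essential: without it the sets $\Int{X}{\Cl{X}{V}}$ could fill all of $A$. One could alternatively note that nonempty open $R\subsetneq A$ already works, but passing to the regular open hull requires the closure to avoid $y$.

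\emph{The equivalence.} Suppose $\Iso(X)$ is dense and $W\in\RO(X)$ is nonzero. Then $W$ is a nonempty open set, so it meets $\Iso(X)$ at some point $x$, and $\{x\}\subseteq W$ is an atom below $W$. Conversely, suppose $\RO(X)$ is atomic and $O$ is a nonempty open set. Then $\Int{X}{\Cl{X}{O}}$ is a nonzero regular open set, so it contains an atom $\{x\}$ with $x\in\Iso(X)$. Since $\{x\}$ is open and lies in $\Cl{X}{O}$, it meets $O$, so $x\in O$. Therefore every nonempty open set meets $\Iso(X)$, and $\Iso(X)$ is dense.
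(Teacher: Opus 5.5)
Your proof is correct and matches the paper's argument step for step, with one exception in atomic $\Rightarrow$ dense: you take the hull $\Int{X}{\Cl{X}{O}}\supseteq O$ and use that the atom $\{x\}\subseteq\Cl{X}{O}$ is open to get $x\in O$, whereas the paper uses regularity to find a nonempty regular open set inside $O$, so your version avoids a second appeal to regularity. You should delete the aside that a merely open $R\subsetneq A$ ``already works'': such an $R$ need not belong to $\RO(X)$ and so contradicts nothing, which is exactly why you needed $\Cl{X}{V}$ to avoid $y$.
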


\begin{proof}
If $x\in\Iso(X)$, then $\{x\}$ is open and, since $X$ is $T_1$, also
closed.  Hence $\{x\}\in\RO(X)$, and it is an atom.
Conversely, let $A$ be an atom of $\RO(X)$ and choose $x\in A$.  Suppose
that there exists $y\in A\setminus\{x\}$.  Since $X$ is $T_1$, the set
$A\setminus\{y\}$ is an open neighborhood of $x$.  By regularity, there is
an open set $W$ such that
\[
  x\in W\subseteq\Cl{X}{W}\subseteq A\setminus\{y\}.
\]
Put $V=\Int{X}{\Cl{X}{W}}$.  Then $V$ is a nonempty regular open set and
\[
  V\subseteq A\setminus\{y\}\subsetneq A,
\]
contradicting the fact that $A$ is an atom.  Therefore $A=\{x\}$, and hence
$x$ is isolated.

Suppose now that $\Iso(X)$ is dense.  Every nonempty member $U$ of
$\RO(X)$ contains some $x\in\Iso(X)$, and the atom $\{x\}$ lies below $U$.
Thus $\RO(X)$ is atomic.

Conversely, suppose that $\RO(X)$ is atomic, and let $O$ be a nonempty open
subset of $X$.  Choose $x\in O$.  By regularity, there is an open set $W$
such that
\[
  x\in W\subseteq\Cl{X}{W}\subseteq O.
\]
Then $U=\Int{X}{\Cl{X}{W}}$ is a nonempty member of $\RO(X)$ contained in
$O$.  By atomicity, $U$ contains an atom of $\RO(X)$, which by the first
part of the proof is a singleton consisting of an isolated point.  Hence
every nonempty open subset of $X$ meets $\Iso(X)$, so $\Iso(X)$ is dense.
\end{proof}

We are now ready to prove the main theorem of this paper.

\begin{theorem}\label{thm:main}
For a second-countable $T_3$ space $X$, the following statements are
equivalent:
\begin{enumerate}[label=\textup{(\arabic*)},leftmargin=2.8em]
  \item $\Scott\RO(X)$ is sober.
  \item The complete Boolean algebra $\RO(X)$ is atomic.
  \item $\Iso(X)$ is dense in $X$.
\end{enumerate}
\end{theorem}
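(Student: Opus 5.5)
(2)$\Leftrightarrow$(3) is exactly Lemma~\ref{lem:atomic-ro}, since a second-countable $T_3$ space is in particular $T_3$. So the plan is to prove (3)$\Rightarrow$(1) and (1)$\Rightarrow$(3).

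For (3)$\Rightarrow$(1), I would use Corollary~\ref{cor:dense-sobriety} with the dense subspace $Y=\Iso(X)$. Every point of $Y$ is isolated in $Y$, so $Y$ is discrete. Hence every subset of $Y$ is clopen and therefore regular open, giving $\RO(Y)=\Pow(Y)$ under inclusion. By Remark~\ref{rem:standard-facts}(3), $\Scott\Pow(Y)$ is sober, and Corollary~\ref{cor:dense-sobriety}(3)$\Rightarrow$(1) yields that $\Scott\RO(X)$ is sober. The edge case $X=\emptyset$ causes no trouble, since then $\RO(X)$ is the one-point lattice.

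For (1)$\Rightarrow$(3), I argue by contraposition. Suppose $\Iso(X)$ is not dense. Then the open set $O:=X\setminus\Cl{X}{\Iso(X)}$ is nonempty. Put
\[
U:=\Int{X}{\Cl{X}{O}},
\]
a nonempty regular open set. The key point is that $U$ contains no isolated points of $X$. Indeed, if $x\in U$ were isolated, then $\{x\}$ would be an open subset of $\Cl{X}{O}$, hence would meet $O$, forcing $x\in O$; but $O$ is disjoint from $\Iso(X)$. Since $U$ is open in $X$, a point isolated in the subspace $U$ is isolated in $X$. Therefore the subspace $U$ has no isolated points.

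The subspace $U$ is second-countable and $T_3$, because both properties are hereditary. Theorem~\ref{thm:localobstruction} then shows that $\Scott\RO(X)$ is not sober, which completes the contrapositive.

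There is no serious obstacle, since the heavy lifting was done in Theorem~\ref{thm:crowdedobstruction}. The only step that needs care is choosing the witness $U$: we need it to be regular open, as Theorem~\ref{thm:localobstruction} requires, and not merely open, and we must check that passing from $O$ to $\Int{X}{\Cl{X}{O}}$ does not introduce isolated points. That is the closure argument above.
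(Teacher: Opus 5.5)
Your proposal is correct and follows the paper's proof essentially step for step. For (3)$\Rightarrow$(1) the paper cites Lemma~\ref{lem:dense} rather than Corollary~\ref{cor:dense-sobriety}, which is only a cosmetic difference. In (1)$\Rightarrow$(3) the witness is chosen slightly differently:
\begin{itemize}
\item the paper uses regularity to pick $W$ with $\Cl{X}{W}\subseteq O$, so that $U=\Int{X}{\Cl{X}{W}}\subseteq O$ and the absence of isolated points is automatic;
\item you regularize $O$ itself and rule out isolated points by the closure argument, which is also valid.
\end{itemize}
In fact $O=\Int{X}{X\setminus\Iso(X)}$ is the interior of a closed set, hence already regular open, so your $U$ is just $O$.
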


\begin{proof}
By Lemma~\ref{lem:atomic-ro}, statements~\textup{(2)} and~\textup{(3)} are
equivalent.  It remains to prove the equivalence of~\textup{(1)}
and~\textup{(3)}.

(3) $\Rightarrow$ (1): Put $D=\Iso(X)$.  Suppose first that $D$ is dense.  It is discrete as a
subspace of $X$, and Lemma~\ref{lem:dense} gives
\[
  \RO(X)\cong\RO(D)=\Pow(D).
\]
Remark~\ref{rem:standard-facts} shows that the Scott space of the right-hand side
is sober.  Hence $\Scott\RO(X)$ is sober.

(1) $\Rightarrow$ (3): Suppose on the contrary that $D=\Iso(X)$ is not dense.  Then
\[
  O:=X\setminus\Cl{X}{D}
\]
is a nonempty open set.  Choose $x\in O$.  By regularity, choose a nonempty
open set $W$ such that
\[
  x\in W\subseteq\Cl{X}{W}\subseteq O.
\]
Set
\[
  U:=\Int{X}{\Cl{X}{W}}.
\]
Since  $W$ is open, it follows that $U$ is regular open.  Moreover,
$U$ is nonempty and $U\subseteq O$.  Since second-countability and the
$T_3$ property are hereditary to subspaces, $U$ is a second-countable
$T_3$ space.

Finally, $U$ has no isolated points.  If $u\in U$ were isolated in $U$, then
$\{u\}$ would be open in $X$ because $U$ is open in $X$.  Thus $u\in D$, contradicting
$U\subseteq O=X\setminus\Cl{X}{D}$.

By Theorem~\ref{thm:localobstruction}, $\Scott\RO(X)$ is not sober,
which completes the proof.
\end{proof}

\begin{remark}\label{rem:metric}
	The implication from the density of $\Iso(X)$ to the sobriety of
	$\Scott\RO(X)$ holds for every topological space; neither
	second-countability nor the $T_3$ assumption is needed in that direction.
	By the Urysohn metrization
	theorem~\cite[Chapter~I, Section~9]{Bredon1993}, every second-countable $T_3$ space
	is metrizable.  Thus the use of compatible metrics in
	Lemmas~\ref{lem:vanishing-ro} and~\ref{lem:selfsquare} imposes no additional
	assumption.  In particular, Theorem~\ref{thm:main} applies to every
	second-countable Hausdorff topological manifold.
\end{remark}

Let $2^\mathbb{N}$ denote the Cantor space, i.e.,
\[
2^{\mathbb{N}}=\prod_{n\in\mathbb{N}}\{0,1\},
\]
equipped with the product topology, where  $\{0,1\}$ is endowed with the discrete topology. 

For every positive integer $n$, the Euclidean space $\mathbb R^n$ and the
Cantor space are second-countable $T_3$ spaces without isolated points.
The following corollary therefore follows immediately from
Theorem~\ref{thm:main}.

\begin{corollary}\label{cor:cantor-space}
	\begin{enumerate}[label=\textup{(\arabic*)}]
		\item For every positive integer $n$, the Scott space
		$\Scott\bigl(\RO(\mathbb{R}^n)\bigr)$ is not sober.
		\item The Scott space $\Scott\bigl(\RO(2^{\mathbb{N}})\bigr)$ is not sober.
	\end{enumerate}
\end{corollary}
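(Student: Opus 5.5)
The plan is to apply Theorem~\ref{thm:main}, implication (1)$\Rightarrow$(3) in contrapositive form. If $X$ is a second-countable $T_3$ space with $\Iso(X)=\emptyset$, then $\Cl{X}{\Iso(X)}=\emptyset\neq X$ as soon as $X$ is nonempty. Hence $\Scott\RO(X)$ is not sober. Equivalently, one can invoke Theorem~\ref{thm:crowdedobstruction} directly. So for both parts it suffices to check three things: the space is nonempty, second-countable and $T_3$, and it has no isolated points.

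For part (1), $\mathbb R^n$ is a metric space, so it is $T_1$ and regular. It is second-countable, because the open balls with rational centres in $\mathbb Q^n$ and rational radii form a countable base. It has no isolated points, since every open ball $B(x,\varepsilon)$ contains the point $x+(\varepsilon/2)e_1\neq x$. Therefore no singleton is open.

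For part (2), the Cantor space $2^{\mathbb N}$ is metrizable, for instance by $d(s,t)=2^{-\min\{k:s_k\neq t_k\}}$. Hence it is $T_3$; alternatively, it is a product of compact Hausdorff spaces, hence compact Hausdorff and so regular. The cylinder sets determined by finite initial segments form a countable base. Each cylinder $[s_0\cdots s_{k-1}]$ contains infinitely many points, obtained by varying the coordinate at position $k$. So no singleton is open, and $\Iso(2^{\mathbb N})=\emptyset$.

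There is no genuine obstacle here. The only point requiring care is verifying second-countability and the absence of isolated points, and both are routine. The conclusion then follows by quoting Theorem~\ref{thm:main} (or Theorem~\ref{thm:crowdedobstruction}) for each space.
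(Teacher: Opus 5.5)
Your proposal is correct and is essentially the paper's own argument: the paper simply notes that $\mathbb R^n$ and $2^{\mathbb N}$ are (nonempty) second-countable $T_3$ spaces without isolated points and applies Theorem~\ref{thm:main}, exactly as you do. One cosmetic point: varying only the coordinate at position $k$ gives two points of the cylinder rather than infinitely many, but two distinct points already show that no singleton is open.
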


%
%
%

\vspace{0.7cm}

\noindent{\bf Acknowledgments: }
The second author is supported by the National Natural Science Foundation of China (No. 12571507, 12401607, 12101313), the Basic Research Program of Jiangsu Province (No. BK20241086, BK20241087).

\vspace{0.7cm}

\noindent{\bf Declaration of interests: }
The authors declare that they have no known competing financial interests or personal relationships that could have appeared to influence the work reported in this paper.

\section*{Declaration of generative AI and AI-assisted technologies
	in the manuscript preparation process}
During the preparation of this work, the authors used ChatGPT (OpenAI)
to assist with language refinement, manuscript organization, and literature
review. After using this tool, the authors reviewed and edited the content
as needed and take full responsibility for the content of the published article.

\end{document}